\title[{Finitely $\mathcal{F}$-amenable actions and Decomposition Complexity of Groups}]
{Finitely $\mathcal{F}$-amenable actions and  Decomposition Complexity of Groups} %running header
 \author[Nicas]{Andrew Nicas}
\address{Department of Mathematics and Statistics, McMaster University, Hamilton, Ontario, Canada L8S 4K1}
\email{nicas@mcmaster.ca}
\author[Rosenthal]{David Rosenthal}
\address{Department of Mathematics and Computer Science, St.\ John's University, 8000 Utopia
Pkwy, Queens, NY 11439, USA}
\email{rosenthd@stjohns.edu}
\subjclass[2010]{Primary  20F69; Secondary 20F65}
\keywords{Asymptotic dimension, finite decomposition complexity, amenable action}
\numberwithin{equation}{section}
\begin{document}

\begin{abstract}
In his work on the Farrell-Jones Conjecture,
Arthur Bartels introduced the concept of  a ``finitely $\mathcal{F}$-amenable'' group action, where $\mathcal{F}$ is a family of subgroups.
We show how a finitely $\mathcal{F}$-amenable action of a countable group $G$ on a compact metric space,
where the asymptotic dimensions of the elements of $\mathcal{F}$ are bounded from above,
gives an upper  bound for the asymptotic dimension of $G$
viewed as a metric space with a proper left invariant metric.
We  generalize this to families $\mathcal{F}$ whose elements are contained in a collection, $\mathfrak{C}$, of metric families  that satisfies some basic permanence properties:
If $G$ is a countable group and each element of $\mathcal{F}$ belongs to $\mathfrak{C}$ and there exists a finitely $\mathcal{F}$-amenable action of $G$ on a compact metrizable space,
then $G$ is in $\mathfrak{C}$.
Examples of such collections of metric families include:  metric families with weak finite decomposition complexity, exact metric families, and metric families that coarsely embed into Hilbert space.
\end{abstract}

\maketitle

\baselineskip 18pt

%%%%%%%%%%%%%%%%%%%%%%%%%%%%%%%%%%%%%%%%%%%%
%%
%%  LaTeX environments and definitions 
%%
%%%%%%%%%%%%%%%%%%%%%%%%%%%%%%%%%%%%%%%%%%%%

%% Environments
\newtheorem{theorem}{Theorem}[section]
\newtheorem*{thm}{Theorem}
\newtheorem{lemma}[theorem]{Lemma}
\newtheorem{proposition}[theorem]{Proposition}
\newtheorem{corollary}[theorem]{Corollary}
\newtheorem*{cor}{Corollary}
\newtheorem{claim}[theorem]{Claim}
\newtheorem{question}[theorem]{Question}

\theoremstyle{definition}
\newtheorem{definition}[theorem]{Definition}

\newtheorem*{defi}{Definition}

\theoremstyle{definition}
\newtheorem{remark}[theorem]{Remark}

\theoremstyle{remark}
\newtheorem{notation}[theorem]{Notation}

\theoremstyle{definition}
\newtheorem{example}[theorem]{Example}

\theoremstyle{definition}
\newtheorem{assumption}[theorem]{Assumption}

\newtheorem*{conditionA}{Condition (A)}
\newtheorem*{conditionB}{Condition (B)}
\newtheorem*{conditionC}{Condition (C)}

\newcommand{\ra}{{\rightarrow}}
\newcommand{\bs}{{\backslash}}
\newcommand{\lra}{{\longrightarrow}}

% operator names

%\newcommand{\dim}{\operatorname{dim}}
\newcommand{\asdim}{\operatorname{asdim}}
\newcommand{\hyperdim}{\operatorname{hyperdim}}
\newcommand{\whyperdim}{\operatorname{w-hyperdim}}
\newcommand{\id}{\operatorname{id}}
\newcommand{\supp}{\operatorname{supp}}
\newcommand{\st}{\operatorname{star}}
\newcommand{\ev}{\operatorname{ev}}
\newcommand{\mesh}{\operatorname{mesh}}
\newcommand{\diam}{\operatorname{diam}}
\newcommand{\catz}{\operatorname{CAT}(0)}

\newcommand{\tb}{{\large \textbullet}}

\newcommand{\Zhalf}{{\mathbb Z}[1/2]}

%\newcommand{\del}{{\partial}}

% calligraphic letters
\newcommand{\cA}{\mathcal{A}}
\newcommand{\cB}{\mathcal{B}}
\newcommand{\cC}{\mathcal{C}}
\newcommand{\cD}{\mathcal{D}}
\newcommand{\cE}{{\mathcal E}}
\newcommand{\cF}{{\mathcal F}}
\newcommand{\cG}{{\mathcal G}}
\newcommand{\cH}{{\mathcal H}}
\newcommand{\cJ}{{\mathcal J}}
\newcommand{\calo}{\mathcal{O}}
\newcommand{\cP}{{\mathcal P}}
\newcommand{\cT}{{\mathcal T}}
\newcommand{\cU}{{\mathcal U}}
\newcommand{\cV}{{\mathcal V}}
\newcommand{\cW}{{\mathcal W}}
\newcommand{\cX}{{\mathcal X}}
\newcommand{\cY}{{\mathcal Y}}
\newcommand{\cZ}{{\mathcal Z}}
%\newcommand{\cL}{{\mathcal L}_{\rm sp}}
%\newcommand{\mL}{{\mathcal L}_{\rm sp}^{\rm m}}

% fractur letters
\newcommand{\fA}{\mathfrak{A}}
\newcommand{\fB}{\mathfrak{B}}
\newcommand{\fC}{\mathfrak{C}}
\newcommand{\fD}{\mathfrak{D}}
\newcommand{\wD}{w\mathfrak{D}}
\newcommand{\fE}{\mathfrak{E}}
\newcommand{\fH}{\mathfrak{H}}
\newcommand{\fL}{\mathfrak{L}}
\newcommand{\N}{\mathfrak{N}}
\newcommand{\fU}{\mathfrak{U}}

% hatted characters
%\newcommand{\hPhi}{{\widehat \varphi}}

% tilde characters
\newcommand{\tS}{{\widetilde S}}

% barred characters
%\newcommand{\bPhi}{{\bar \varphi}}

%roman characters
\newcommand{\B}{{\rm B}}
\newcommand{\E}{{\rm E}}

% bold characters
\newcommand{\K}{{\mathbf K}}
\newcommand{\HH}{{\mathbf{HH}}}

% black board bold characters
\newcommand{\bN}{{\mathbb N}}
\newcommand{\R}{{\mathbb R}}
\newcommand{\Z}{{\mathbb Z}}

% underlined characters
%\newcommand{\n}{{\underline n}}

%%%%%%%%%%%%%%%%%%%%%%%%%%%%%%%%%%%%%%%%%%%%
%%
%%  paper starts here
%%
%%%%%%%%%%%%%%%%%%%%%%%%%%%%%%%%%%%%%%%%%%%%

%%%%%%%%%%     New section     %%%%%%%%%%

\section{Introduction}

The celebrated {\it Farrell-Jones Conjecture} asserts that certain ``assembly maps'' are isomorphisms. 
This conjecture is central to the modern study of high dimensional topology, see
\cite{Luck-ICM} and \cite{Bartels-ICM} for an overview.
Building on his approach in~\cite{Bartels1},  Bartels formulated the following concept of a ``finitely $\mathcal{F}$-amenable action''  in order to 
relate some of the key geometric conditions used to establish the Farrell-Jones Conjecture for many classes of groups to various notions of amenability arising in geometric group theory and analysis (\cite[Remark 0.4]{Bartels17}).

\begin{defi}[{\cite[Definition 0.1]{Bartels17}}]
Let $\cF$ be a family of subgroups of $G$ and let $N$ be a non-negative integer.
A $G$-action on a space $X$ is {\it $N$-$\cF$-amenable} if for any finite subset $S$ of $G$ there exists an open $\cF$-cover $\cU$ of $G\times X$ (equipped with the diagonal $G$-action) such that:
\begin{enumerate}
	\item the dimension of $\cU$ is at most $N$; and 
	\item for all $x\in X$ there is a $U \in \cU$ with $S \times \{x\} \subseteq U$.
\end{enumerate}
A $G$-action is called {\it finitely $\cF$-amenable} if it is $N$-$\cF$-amenable for some $N$.
\end{defi}

Bartels employed finitely $\cF$-amenable actions to elucidate the conditions used by Bartels, L\"{u}ck and Reich in~\cite{BLR08} to prove the Farrell-Jones Conjecture for word hyperbolic groups. 
In particular, he showed that if $G$ is a group that admits a finitely $\cF$-amenable action  
on a compact, finite-dimensional, contractible ANR,
then $G$ satisfies the $K$-theoretic Farrell-Jones Conjecture relative to $\cF$, \cite{Bartels17}.
A similar statement holds for the $L$-theoretic Farrell-Jones Conjecture.
Bartels and Bestvina established the Farrell-Jones Conjecture for mapping class groups by showing that the action of a mapping class group on the Thurston compactification of Teichm\"{u}ller space is finitely $\cF$-amenable,
where $\cF$ is the family of virtual point stabilizers, \cite{Bartels-Bestvina}.
The notion of finite $\cF$-amenability has also been studied by Sawicki~\cite{sawicki} in the guise of {\it equivariant asymptotic dimension}.

Our goal is to study the coarse geometric applications of finitely $\mathcal{F}$-amenable actions.
Willet and Yu observed (see \cite[Remark 1.3.5]{Bartels1})
that if there is a uniform bound on the asymptotic dimension of the groups in $\cF$, then a group $G$ that admits a finitely \linebreak 
$\cF$-amenable action on a compact metrizable space $X$ must have finite asymptotic dimension.
Motivated by this observation, we establish the following theorem using methods that allow us to extend it in a natural manner to a more general setting.

\begin{thm}[Theorem~\ref{cor:NF-amenable}]
Let $G$ be a countable group and $\cF$ be a family of subgroups of $G$. 
	If there exists an $N$-$\cF$-amenable action of $G$ on a compact metrizable space $X$ and $\asdim(F)\leq k$ for each $F\in \cF$, then $\asdim(G)\leq N+k$.	
\end{thm}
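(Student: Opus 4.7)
The plan is to show that $\asdim G \leq N+k$ by producing, for every $R > 0$, an open cover of $G$ (viewed with a proper left-invariant metric) by uniformly bounded sets of $R$-multiplicity at most $N+k+1$. The overall strategy is to transfer the open cover supplied by the hypothesis from $G \times X$ down to $G$, and then refine it using the asymptotic dimension bound on the subgroups in $\cF$.

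Given $R > 0$, let $S = B_G(e, R)$, which is a finite set since the metric on $G$ is proper. Applying $N$-$\cF$-amenability to $S$ yields an open $\cF$-cover $\cU$ of $G \times X$ with $\dim \cU \leq N$ such that for every $x \in X$ some $U \in \cU$ contains $S \times \{x\}$. Fix a basepoint $x_0 \in X$ and set $C_U := \{g \in G : (g, x_0) \in U\}$ for each $U \in \cU$. Then $\cC := \{C_U\}_{U \in \cU}$ is an open cover of $G$, and two of its properties are immediate: (i) $\cC$ has multiplicity at most $N+1$, directly from $\dim \cU \leq N$; and (ii) every ball $B_G(g, R) = gS$ is contained in some $C_U$. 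Property (ii) uses the $G$-equivariance of $\cU$: the $S$-thickening at $g^{-1}x_0 \in X$ gives $V \in \cU$ with $S \times \{g^{-1}x_0\} \subseteq V$, and then $g \cdot (s, g^{-1}x_0) = (gs, x_0)$ for all $s \in S$ yields $gS \subseteq C_{gV}$.

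Next, I would bound $\asdim C_U \leq k$ uniformly in $U$. Using compactness of $X$ together with the $S$-thickening, one may reduce to the case that $\cU$ has only finitely many $G$-orbits, with representatives $U_1, \ldots, U_m$ and stabilizers $F_1, \ldots, F_m \in \cF$. A short analysis of the $F_U$-action on $U$, combined with the point-finiteness forced by $\dim \cU \leq N$, lets one show that each $C_U$ lies in a finite union of right cosets of $F_U$. Although these right cosets are not isometric to $F_U$ in a left-invariant metric, they are quasi-isometrically embedded with constants controlled once the finitely many orbit representatives are fixed, so $\asdim C_U \leq \asdim F_U \leq k$ uniformly over $U$.

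The final and most delicate step is to combine the cover $\cC$, of multiplicity at most $N+1$, with the uniform bound $\asdim C_U \leq k$ on its elements to conclude $\asdim G \leq N+k$. This would proceed via a Hurewicz-type theorem for asymptotic dimension in the spirit of Bell and Dranishnikov: informally, the ``projection'' of $G$ onto the nerve of $\cC$ is a coarse map to an $N$-dimensional target whose preimages of bounded sets have asymptotic dimension at most $k$, and additivity of asymptotic dimension along such maps gives the desired bound. I expect this last step to be the main obstacle, because a naive product-refinement would give only the multiplicative bound $(N+1)(k+1)$; the sharp additive bound $N+k+1$ requires a careful staggered coloring in which the $k+1$ fine colors of the asdim-$k$ covers of the $C_U$'s are reused across the $N+1$ ``layers'' of $\cC$ rather than multiplied with them.
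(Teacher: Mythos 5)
Your reduction to a cover of $G$ is sound, and in fact your sets $C_U=\{g\in G \mid (g,x_0)\in U\}$ coincide with the sets the paper uses: they are exactly the preimages $\varphi_{x_0}^{-1}\big(\st([U])\big)$ of open stars of the nerve under the $\varepsilon$-Lipschitz maps $\varphi_{x_0}(g)=g f(g^{-1}x_0)$, and your final step (a Hurewicz/Kolmogorov-type staggered coloring to turn a multiplicity-$(N+1)$ cover with uniformly asdim-$k$ pieces into the additive bound $N+k$) is precisely the paper's Theorem on $n$-decomposability over $\fA_k$, proved by the Kolmogorov trick from Bell--Dranishnikov--et al. The genuine gap is in the middle step, the \emph{uniform} bound $\asdim(\{C_U\}_U)\le k$. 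First, the structural claim that each $C_U$ lies in a finite union of cosets of stabilizers with controlled constants does not follow from point-finiteness of $\cU$ or from reducing to finitely many $G$-orbits: writing $(g,x_0)\in U$ as $(e,g^{-1}x_0)\in g^{-1}U$, one needs that only \emph{finitely many} members of $\cU$ meet the compact slice $\{e\}\times X$ (equivalently, that the image of $X$ in the nerve lies in a finite subcomplex), and a $G$-invariant cover with finitely many orbits can still have infinitely many members meeting that slice. This is where compactness of $X$ must be used nontrivially; the paper handles it by homotoping the map to the nerve so that it factors through the CW (weak) topology, at a small cost to the approximate equivariance constant.

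Second, and more seriously, your uniformity mechanism is wrong: a right coset $F_U g$ in a left-invariant metric is isometric to the left coset $g\,(g^{-1}F_U g)$, i.e.\ to a \emph{conjugate} of $F_U$, and the conjugation distortion is not controlled by fixing finitely many orbit representatives -- it depends on $g$, which ranges over infinitely many elements. The hypothesis $\asdim(F)\le k$ for each $F\in\cF$ bounds each conjugate individually but gives no uniform bound for the infinite family of conjugates, and asymptotic dimension of a metric family genuinely requires uniform decompositions (a family of spaces each of asymptotic dimension $0$ can have family asymptotic dimension $1$). The paper's way around this is to show that each star-preimage is contained in a finite union of \emph{left} cosets $g_{v,i}G_{v_i}$ of the \emph{finitely many fixed} stabilizers $G_{v_1},\dots,G_{v_m}$ of the vertices of the finite subcomplex containing $f(X)$; left cosets are isometric to the subgroups themselves, so the finite-union theorem for metric families (Corollary on $\{\bigcup_i g_iH_i\}$) gives the uniform bound $k$. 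So your outline has the right skeleton, but without the finite-subcomplex/compactness reduction and with right cosets in place of left cosets of fixed stabilizers, the key uniform estimate is unjustified and, as stated, false in general.
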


In this paper, 
we view a countable group $G$ as a metric space equipped with a proper left invariant metric,
where ``proper'' means that balls in $G$ have finitely many elements.
If $\rho$ and $\rho'$ are proper left invariant metrics on $G$ then the identity map $(G, \rho) \rightarrow (G, \rho')$
is a coarse equivalence by
\cite[Proposition 1.1]{DranishnikovSmith}.
Consequently, the asymptotic dimension of $G$, or any coarse property of $G$, 
is independent of the choice of  proper left invariant metric.

A {\it metric family} is a set whose elements are metric spaces.  
A {\it permanence property} of a collection $\fC$ of metric families is an operation that when applied to members of $\fC$ yields another member of $\fC$.
Our proof of the above theorem generalizes to families of subgroups whose elements are contained in a collection of metric families that satisfies some basic permanence properties.

\begin{thm}[Theorem~\ref{cor:NF-amenable-general}]
	Let $G$ be a countable group, $\cF$ be a family of subgroups of $G$, and $\fC$ be a collection of metric families satisfying Coarse Permanence~(\ref{def:closed-under-embeddings}), Finite Amalgamation Permanence~(\ref{def:amalgam}), and Finite Union Permanence~(\ref{def:finunion}). 
	
	 If there exists an $N$-$\cF$-amenable action of $G$ on a compact metrizable space and each $F\in \cF$ belongs to $\fC$, then $G$ is $N$-decomposable over $\fC$.	 If $\fC$ is also stable under weak decomposition, then $G$ is in $\fC$.
\end{thm}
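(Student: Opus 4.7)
The plan is to mimic the proof of Theorem~\ref{cor:NF-amenable}, systematically replacing each asymptotic-dimension-specific step by an application of one of the three permanence axioms of $\fC$. Endow $G$ with a proper left invariant metric. For a fixed $r > 0$, the task is to produce an $r$-decomposition $G = G^0 \cup \cdots \cup G^N$ whose pieces form a metric family in $\fC$; letting $r$ vary then gives that $G$ is $N$-decomposable over $\fC$.

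Choose a finite subset $S \subseteq G$ containing $B_r(e)$, and apply $N$-$\cF$-amenability to obtain an open $\cF$-cover $\cU$ of $G \times X$ with $\dim \cU \leq N$ satisfying the slice Lebesgue condition. Fix a basepoint $x_0 \in X$ and, for each $U \in \cU$, set
\[
V_U \;=\; \{\,g \in G : (g, g x_0) \in U\,\}.
\]
The key structural observation is that each $V_U$ is a disjoint union of at most $N+1$ right cosets of $G_U \in \cF$. Indeed, for $g \in V_U$ one has $(e, x_0) \in g^{-1} U$, so the assignment $g \mapsto g^{-1} U$ sends $V_U$ into the set $\cU_{(e, x_0)}$ of cover elements containing $(e, x_0)$, which has cardinality at most $N+1$ by $\dim \cU \leq N$. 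The $\cF$-cover rigidity $gU \cap U \neq \emptyset \Rightarrow g \in G_U$ yields that two elements of $V_U$ share the same image exactly when they lie in a common right coset of $G_U$, so the fiber over each element of $\cU_{(e, x_0)}$ is a right coset $G_U g_W$. Since the metric is left invariant, each such right coset is isometric to $G_U$; Coarse Permanence therefore places each coset in $\fC$, and Finite Amalgamation Permanence places each $V_U$ in $\fC$.

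To turn the family $\{V_U\}_{U \in \cU}$ into an $r$-decomposition of $G$, one notes that it covers $G$ with multiplicity at most $N+1$ (inherited from $\cU$), and that the slice Lebesgue condition—combined with $G$-equivariance of $\cU$ and the openness of cover elements together with compactness of $X$—yields a Lebesgue-at-scale-$r$ property for $\{V_U\}$. A standard Ostrand--Bell--Dranishnikov coloring and shrinking argument then decomposes the cover into $N+1$ color classes whose pieces are pairwise $r$-disjoint, producing $G = G^0 \cup \cdots \cup G^N$ with each $G^i$ an $r$-disjoint union of subsets of the $V_U$. Coarse Permanence (for passage to subspaces) together with Finite Union Permanence assemble the constituents into a metric family in $\fC$. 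The second assertion, that $G \in \fC$ when $\fC$ is stable under weak decomposition, is then immediate, since an $N$-decomposition is in particular a weak decomposition.

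The principal obstacle is producing the Lebesgue property of $\{V_U\}$ at scale $r$, because Bartels' slice-Lebesgue condition controls slices $G \times \{x\}$ while the $V_U$ construction requires controlling diagonal pieces $\{(s, s x_0) : s \in S\}$. Bridging this gap—via openness of cover elements, continuity of the $G$-action, and compactness of $X$—is the technical heart of the argument; once it is in place, the rest of the proof reduces to three clean invocations of the permanence axioms.
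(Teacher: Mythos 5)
Your proposal has two genuine gaps, and the first is fatal as written. The step you yourself call ``the technical heart'' --- the Lebesgue-at-scale-$r$ property of the pulled-back family $\{V_U\}$ --- is precisely what is not supplied, and it does not follow from ``openness, continuity of the action, and compactness'' alone. The amenability hypothesis controls horizontal slices $S\times\{x\}$, whereas your diagonal pullback $V_U=\{g:(g,gx_0)\in U\}$ requires a single $U$ to contain the diagonal set $\{(s,sx_0):s\in S\}$, whose second coordinates vary; nothing in the hypotheses relates the slices through $x_0$ and through $sx_0$. This is exactly the difficulty the paper resolves differently: following \cite[Proposition 4.3 and Lemma 5.1]{BLR08} one builds a $G$-invariant metric on $G\times X$ in which each ball of radius $R$ lies in some element of $\cU$, and then maps to the nerve $E=\mathrm{Nerve}(\cU)$ by the associated partition of unity, converting the Lebesgue condition into the statement that $f(x)=\psi(e,x)$ is $G$-equivariant up to $\varepsilon$ (Proposition~\ref{prop:NF-amenable}); the maps $\varphi_x(g)=gf(g^{-1}x)=\psi(g,x)$ are then $\varepsilon$-Lipschitz (Lemma~\ref{lem:bartels-lipschitz}) and one verifies Condition~(C). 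Without an analogue of that metric construction, your coloring/shrinking argument has no largeness property to run on, so the claimed $r$-disjoint decomposition is not established.

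The second gap is in the coset and uniformity step. With a left invariant metric it is left cosets $gG_U$, not right cosets $G_Ug$, that are isometric to $G_U$; your $V_U$ is a union of right cosets, and $G_Ug$ is isometric only to the conjugate $g^{-1}G_Ug$. This is repairable (families of subgroups are closed under conjugation, so each conjugate lies in $\cF\subseteq\fC$), but a more serious issue remains: to conclude that $G$ is $(r,N)$-decomposable over $\fC$ you must exhibit a \emph{single} metric family in $\fC$ containing all the pieces coming from all (infinitely many) $U\in\cU$, and Finite Amalgamation and Finite Union Permanence only combine finitely many families. You therefore need to show that only finitely many isometry types of $V_U$ occur --- e.g.\ that the relevant $U$ lie in at most $N+1$ $G$-orbits and $V_{gU}=gV_U$ is a left translate --- or some substitute for the paper's use of compactness of $X$, which forces $f(X)$ into a finite subcomplex $E_f$ (via Lemma~\ref{Whiteheadtopologylemma}) so that only the finitely many stabilizers $G_{v_1},\dots,G_{v_m}$ and their \emph{left} cosets appear, after which Coarse, Finite Amalgamation and Finite Union Permanence apply. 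Your proposal instead places each $V_U$ in $\fC$ one at a time and waves at the assembly, so the decomposability over $\fC$ (a uniform statement about one family) is not actually obtained.
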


In the case $\fC$ is the collection of metric families with asymptotic dimension at most $k$, Theorem~\ref{cor:NF-amenable-general} reduces to Theorem~\ref{cor:NF-amenable}. This depends on the fact that if a metric family $\cX$ $N$-decomposes over the collection of metric families with asymptotic dimension at most $k$, then $\asdim(\cX)$ is at most $N+k$, which we prove in Theorem~\ref{thm:asdim-pullback}.

Theorem~\ref{cor:NF-amenable-general} also applies to the collection $\fD$, of metric families with {\it finite decomposition complexity} (abbreviated to ``FDC"),
and to the collection $\wD$, of metric families with {\it weak finite decomposition complexity} (abbreviated to ``weak FDC"),
concepts introduced by Guentner, Tessera and Yu in \cite{Guentner_Tessera_Yu1, Guentner_Tessera_Yu2}.
A metric family $\cX$ is said to be {\it weakly decomposable} over a collection $\fC$ if,  for some non-negative integer $n$, $\cX$ is $n$-decomposable over $\fC$ (see Definition \ref{def:n-decompose})
and $\cX$ is {\it strongly decomposable} over $\fC$ if $\cX$ is $1$-decomposable over $\fC$.
The collection $\fD$ is  the smallest collection of metric families that contains all bounded metric families (that is, metric families whose elements have uniformly bounded diameters) and is stable under strong decomposition.
The collection $\wD$ is the smallest collection of metric families that contains all bounded metric families and is stable under weak decomposition.
FDC and weak FDC are interesting conditions because they have important topological consequences. 
A countable group with weak FDC satisfies the {\it Novikov Conjecture}, and a metric space with FDC and bounded geometry satisfies the {\it Bounded Borel Conjecture},~\cite{Guentner_Tessera_Yu1,Guentner_Tessera_Yu2}.
These results were obtained via an analysis of assembly maps in $L$-theory and topological $K$-theory.

Two other collections of metric families of importance for the Novikov Conjecture are:
\begin{itemize}
  \item $~\fE$, the collection of {\it exact metric families} (\cite[Definition 4.0.8]{Guentner_Tessera_Yu2}), and
  \item $~\fH$, the collection of metric families that are {\it coarsely embeddable into Hilbert space} (see Definition~\ref{def:coarse-embedding-Hilbert}).
\end{itemize}
It follows from \cite[Theorem 6.1]{STY} that a
countable group in $\fH$ satisfies the Novikov Conjecture.
Note that $\wD \subset \fE \subset \fH$.
Moreover, both $\fE$ and $\fH$ satisfy Coarse Permanence, Finite Amalgamation Permanence, and Finite Union Permanence. Since $\wD$, $\fE$ and $\fH$ are each stable under weak decomposition, we get the following corollary to Theorem~\ref{cor:NF-amenable-general}.

\begin{cor}[Corollary~\ref{cor:deh}]
Let $\fC$ be equal to $\wD$, $\fE$ or $\fH$. Let $G$ be a countable group and $\cF$ be a family of subgroups of $G$ such that each $F\in \cF$ belongs to $\fC$.
If there exists a finitely $\cF$-amenable action of $G$ on a compact metrizable space, then $G$ is in $\fC$.	
\end{cor}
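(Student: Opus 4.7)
The plan is to derive this as a direct consequence of Theorem~\ref{cor:NF-amenable-general}. That theorem, applied to a collection $\fC$ satisfying Coarse, Finite Amalgamation, and Finite Union Permanence, concludes from an $N$-$\cF$-amenable action with $\cF \subset \fC$ that $G$ is $N$-decomposable over $\fC$; if $\fC$ is additionally stable under weak decomposition, this upgrades to $G \in \fC$. So the task reduces to verifying, for each of $\wD$, $\fE$, and $\fH$, both the three permanence properties and stability under weak decomposition, and then invoking the theorem.

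For $\wD$ the verification is essentially formal. Stability under weak decomposition is immediate from the definition of $\wD$ as the smallest collection of metric families that contains the bounded families and is stable under weak decomposition. Coarse Permanence, Finite Amalgamation Permanence, and Finite Union Permanence can then be established by transfinite induction on the weak decomposition rank of a member of $\wD$: each property holds trivially at the base of bounded metric families and is propagated through a single weak decomposition step by decomposing the relevant pullbacks, amalgams, or finite unions termwise.

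For $\fE$ and $\fH$ the three permanence properties follow from standard features of exactness and of coarse embeddability into Hilbert space: both conditions are coarse invariants (giving Coarse Permanence), and they are stable under finite unions and finite amalgams by combining partitions of unity, or by taking orthogonal sums of Hilbert space embeddings together with a bookkeeping coordinate that records the gluing data. Stability under weak decomposition is the genuinely nontrivial ingredient here: for $\fE$ this is a theorem of Guentner--Tessera--Yu, and for $\fH$ it is a well-known permanence result in coarse geometry, and these would be the main obstacles if one were proving everything from scratch. With these properties in hand, the corollary follows: any finitely $\cF$-amenable action is $N$-$\cF$-amenable for some $N$, Theorem~\ref{cor:NF-amenable-general} applies, and its second conclusion places $G$ itself in $\fC$.
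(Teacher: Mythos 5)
Your proposal is correct and follows the paper's own route: the corollary is deduced directly from Theorem~\ref{cor:NF-amenable-general}, with the needed facts that $\wD$, $\fE$ and $\fH$ satisfy Coarse, Finite Amalgamation and Finite Union Permanence and are stable under weak decomposition supplied by the literature (the paper simply cites these in Remark~\ref{rem:collections}, e.g.\ Guentner--Tessera--Yu for $\wD$ and $\fE$, and \cite[Theorem 9.23]{Nicas-Rosenthal} for $\fH$), rather than reproved. Your sketches of those permanence verifications match the standard arguments, and note that by Lemma~\ref{lem:weak-decomp-finite-union} Finite Union Permanence is automatic once Finite Amalgamation Permanence and stability under weak decomposition are known, so that check is redundant but harmless.
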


In \cite{Bartels17}, Bartels proved that if a countable group $G$ is relatively hyperbolic with respect to peripheral subgroups $P_1,\dots, P_n$, then the action of $G$ on its boundary is finitely $\cP$-amenable, where $\cP$ is the family of subgroups of $G$ that are either virtually cyclic or subconjugated to one of the $P_i$'s. Thus, we obtain the following application of Theorem~\ref{cor:NF-amenable-general} to relatively hyperbolic groups.

\begin{thm}[Theorem~\ref{thm:rel_hyp}]
Let $G$ be a countable group that is relatively hyperbolic with respect to peripheral subgroups $P_1,\dots,P_n$, and let $\fC$ be a collection of metric families satisfying Coarse Permanence, Finite Amalgamation Permanence, and Finite Union Permanence. If $\fC$ contains $P_1,\dots,P_n$ and the infinite cyclic group $\Z$, then $G$ is $N$-decomposable over $\fC$ for some $N$. If $\fC$ is also stable under weak decomposition, then $G$ is in $\fC$.
\end{thm}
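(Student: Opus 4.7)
The plan is to deduce Theorem~\ref{thm:rel_hyp} as a direct application of Theorem~\ref{cor:NF-amenable-general}, using Bartels' result that relatively hyperbolic groups act finitely $\mathcal{P}$-amenably on their (Bowditch) boundary, where $\mathcal{P}$ is the family of subgroups that are virtually cyclic or subconjugate to some $P_i$. Since the boundary of a relatively hyperbolic group is compact and metrizable, the only remaining task is to verify that every $F \in \mathcal{P}$ belongs to $\mathfrak{C}$; once this is done, Theorem~\ref{cor:NF-amenable-general} yields that $G$ is $N$-decomposable over $\mathfrak{C}$ for some $N$, and the second assertion follows by stability under weak decomposition.

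The main step, then, is to check membership of each $F \in \mathcal{P}$ in $\mathfrak{C}$. I would split into the two cases given by the definition of $\mathcal{P}$. First, if $F$ is virtually cyclic, then $F$ is either finite or contains $\mathbb{Z}$ as a finite index subgroup; in either case $F$ is coarsely equivalent to $\mathbb{Z}$ or to a one-point space. Since $\mathfrak{C}$ contains $\mathbb{Z}$, and a one-point space coarsely embeds into $\mathbb{Z}$, Coarse Permanence places $F$ in $\mathfrak{C}$. Second, if $F$ is subconjugate to $P_i$, say $F \subseteq g P_i g^{-1}$, then left translation by $g^{-1}$ is an isometry of $G$ with a proper left invariant metric that carries $F$ into $P_i$, so $F$ admits a coarse (in fact isometric) embedding into $P_i$, and again Coarse Permanence implies $F \in \mathfrak{C}$.

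With these membership facts in hand, one applies Theorem~\ref{cor:NF-amenable-general} to the $N$-$\mathcal{P}$-amenable action of $G$ on its boundary to conclude $N$-decomposability of $G$ over $\mathfrak{C}$, and under the additional stability hypothesis to conclude $G \in \mathfrak{C}$. I do not anticipate a substantive obstacle here: once Bartels' theorem is invoked, the argument is a bookkeeping verification that the permanence properties of $\mathfrak{C}$ propagate through conjugation and coarse equivalence, so the only thing that requires care is being explicit about why the inclusions $F \hookrightarrow \mathbb{Z}$ or $F \hookrightarrow P_i$ are coarse embeddings with respect to any proper left invariant metric on $G$, which follows from the fact that any two such metrics on a countable group are coarsely equivalent.
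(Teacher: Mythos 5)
Your proposal is essentially the paper's own proof: invoke Bartels' theorem that the boundary action is finitely $\cP$-amenable, check that every member of $\cP$ lies in $\fC$ via Coarse Permanence (virtually cyclic groups being coarsely equivalent to $\Z$ or a point, subconjugates of the $P_i$ coarsely embedding into $P_i$), and then apply Theorem~\ref{cor:NF-amenable-general}. One small correction: if $F \subseteq g P_i g^{-1}$, left translation by $g^{-1}$ carries $F$ into the coset $P_i g^{-1}$, not into $P_i$; the map you want is conjugation $x \mapsto g^{-1} x g$, which is not an isometry for a left invariant metric but changes the norm by at most $2\,\|g\|_\rho$ and hence is a coarse embedding (equivalently, as the paper notes, conjugate subgroups of $G$ are coarsely equivalent), which is all that Coarse Permanence requires.
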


While Bartels and L\"{u}ck succeeded in verifying the Farrell-Jones Conjecture for $\catz$ groups by utilizing ``homotopy actions'' rather than finitely $\cF$-amenable actions~\cite{Bartels-Luck-a, Bartels-Luck-b},
it is still unknown if $\catz$ groups always have finite asymptotic dimension (Question \ref{catzero}).
Theorem~\ref{cor:NF-amenable} suggests a possible approach to this question.
Let $Y$ be a finite dimensional $\catz$ space on which the $\catz$ group $G$ acts geometrically.
Let $\cF$ be the family of virtual abelian subgroups of $G$.
If it is true that 
Caprace's {\it refined boundary}, $\partial_\infty^{\rm \, fine}\,Y$, as defined in \cite{Caprace-2009} has a compact metrizable topology for which the $G$-action on it  is finitely $\cF$-amenable (Question \ref{refined-boundary})
then $G$ has finite asymptotic dimension (Proposition~\ref{prop:catzero-is-true}).

{\bf Acknowledgements.} 
Andrew Nicas was partially supported by a grant from the Natural Sciences and Engineering Research Council of Canada.
David Rosenthal was partially supported by a grant from the Simons Foundation (\#524141). 
Rosenthal would also like to thank Wolfgang L\"{u}ck for his hospitality in Bonn, where portions of this project were completed,
and for support from L\"{u}ck's European Research Council Advanced Grant  ``KL2MG-interactions'' (\#662400). The authors would also like to thank the referee for many valuable comments.

%%%%%%%%%%     New section     %%%%%%%%%%

\section{Decomposition Over a Collection of Metric Families}

In this section, we treat aspects of the coarse geometry of metric families needed for the proofs of our main technical results in Section~\ref{sec:main-thm}.
We recall the notion of {\it $n$-decomposition}, introduced by Guentner, Tessera and Yu as a generalization of finite asymptotic dimension,~\cite{Guentner_Tessera_Yu2}. 
Permanence properties of certain important collections of metric families are discussed.
We show that if a metric family $\cX$ is $n$-decomposable over the collection of metric families with asymptotic dimension at most $k$, then $\asdim(\cX)$ is at most $n+k$ (Theorem~\ref{thm:asdim-pullback}).
This result plays an important role in our proof of Theorem~\ref{thmA-asdim}.

\begin{definition}\label{def:r-decompose}
	Let $r>0$ and $n$ be a non-negative integer. The metric family $\cX$ is {\em $(r,n)$-decomposable} over the metric family $\cY$ if for every $X$ in $\cX$, $X=X_0 \cup X_1 \cup \cdots \cup X_n$ such that for each $i$ 
	$$X_i=\bigsqcup_{r\text{-disjoint}}X_{ij}$$ 
where each $X_{ij}$ is in $\cY$. 
\end{definition}

\begin{definition}\label{def:n-decompose}
	Let $n$ be a non-negative integer, and let $\fC$ be a collection of metric families. The metric family $\cX$ is {\em $n$-decomposable} over $\fC$ if for every $r>0$ $\cX$ is $(r,n)$-decomposable over some metric family $\cY$ in $\fC$. 
\end{definition}

Following~\cite{Guentner_Tessera_Yu2}, we say that $\cX$ is {\em weakly decomposable} over $\fC$ if $\cX$ is $n$-decomposable over $\fC$ for some non-negative integer $n$, and $\cX$ is {\em strongly decomposable} over $\fC$ if $\cX$ is $1$-decomposable over $\fC$.

\begin{definition}
	A metric family $\cZ$ is {\em bounded} if the diameters of the elements of $\cZ$ are uniformly bounded, that is, if $\diam(\cZ) = \sup\{\diam(Z)~|~Z \in \cZ\}<\infty$.
	The collection of all bounded metric families is denoted by $\fB$.
\end{definition}

\begin{example}\label{ex-asdim}
	Let $X$ be a metric space. The statement that the metric family $\{ X \}$ is $n$-decomposable over $\fB$ is equivalent to the statement that $\asdim(X) \leq n$.
\end{example}

The following definition is equivalent to Bell and Dranishnikov's definition of a collection of metric spaces having finite asymptotic dimension ``uniformly" (\cite[Section 1]{Bell_Dranish1}).

\begin{definition}\label{def:family-asdim}
	Let $n$ be a non-negative integer. The metric family $\cX$ has {\em asymptotic dimension at most $n$}, denoted $\asdim(\cX)\leq n$, if $\cX$ is $n$-decomposable over $\fB$.
\end{definition}

\begin{definition}\label{def:FDC}
	Let $\fD$ be the smallest collection of metric families containing $\fB$ that is closed under strong decomposition, and let $\wD$ be the smallest collection of metric families containing $\fB$ that is closed under weak decomposition. A metric family in $\fD$ is said to have {\em finite decomposition complexity} (abbreviated to ``FDC"), and a metric family in $\wD$ is said to have {\em weak finite decomposition complexity} (abbreviated to ``weak FDC").
\end{definition}

Clearly,  finite decomposition complexity implies weak finite decomposition complexity.
The converse is unknown.

\smallskip		
	Next, we recall some terminology introduced in~\cite{Guentner_Tessera_Yu2} that generalizes basic notions from the coarse geometry of metric spaces to metric families. 
	
	Let $\cX$ and $\cY$ be metric families.
	A {\it subfamily} $\cA$ of a metric family $\cY$ is a metric family such that every $A \in \cA$ is a metric subspace of some $Y \in \cY$.
	A {\it map of families}, $F:\cX \to \cY$, is a collection of functions $F=\{f:X \to Y\}$, where $X \in \cX$ and $Y \in \cY$, such that every $X \in \cX$ is the domain of at least one $f$ in $F$. 
The {\it inverse image} of the subfamily $\cA$ in $\cY$ under the map $F\colon \cX\rightarrow \cY$ is the subfamily of $\cX$ given by $F^{-1}(\cA)=\left\{ f^{-1}(A) \; | \; A\in \cA, f\in F \right\}$.

\begin{definition}\label{def:coarse-embedding}
\noindent
(1) A map of metric families, $F:\cX \to \cY$, is a {\em coarse embedding} if there exist non-decreasing functions $\delta, \varrho: [0,\infty) \to [0,\infty)$, with $\lim_{t\to \infty}\delta(t)=\infty=\lim_{t\to \infty}\varrho(t)$, such that for every $f:X \to Y$ in $F$ and every $x,y\in X$,
	\[ \delta\big(d_X(x,y)\big) \leq d_Y\big(f(x),f(y)\big) \leq \varrho\big(d_X(x,y)\big). \]

(2) A map of metric families, $F:\cX \to \cY$, is a {\em coarse equivalence} if for each $f:X \to Y$ in $F$ there is a map $g_f:Y\to X$ such that: 
	\begin{enumerate}
		\item[(i)] the collection $G=\{g_f\}$ is a coarse embedding from $\cY$ to $\cX$; and 
		\item[(ii)] the composites $f\circ g_f$ and $g_f \circ f$ are {\em uniformly close} to the identity maps $\id_Y$ and $\id_X$, respectively, in the sense that there is a constant $C>0$ with
	\[ d_Y\big(y,f\circ g_f(y)\big)\leq C \text{ and } d_X\big(x,g_f\circ f(x)\big)\leq C, \]
for every $f:X \to Y$ in $F$, $x\in X$, and $y\in Y$.
	\end{enumerate}
\end{definition}

\begin{definition}\label{def:closed-under-embeddings}
A collection of metric families, $\fC$, satisfies \emph{Coarse Permanence} if whenever $\cY\in\fC$ and $F\colon\cX\to\cY$ is a coarse embedding, then $\cX\in\fC$.
\end{definition}

Coarse Permanence is an important property for a collection of metric families to have when working with the notion of decomposition, as the following theorem illustrates. (A proof of it can be found in \cite[Theorem 9.13]{Nicas-Rosenthal}.)

\begin{theorem}\label{thm:cembed-decomp}
Let $\cX$ and $\cY$ be metric families, and let $\fC$ be a collection of metric families that satisfies Coarse Permanence. If $\cX$ coarsely embeds into $\cY$ and $\cY$ is $n$-decomposable over $\fC$, then $\cX$ is $n$-decomposable over $\fC$. In particular, if $\cX$ is coarsely equivalent to $\cY$, then $\cX$ is $n$-decomposable over $\fC$ if and only if $\cY$ is $n$-decomposable over~$\fC$. \qed
\end{theorem}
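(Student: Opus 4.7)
The plan is to verify the decomposition property directly by pulling back the decomposition of $\cY$ through the coarse embedding. Fix $r > 0$; I want to produce a metric family $\cZ \in \fC$ such that $\cX$ is $(r,n)$-decomposable over $\cZ$. Let $F \colon \cX \to \cY$ be the coarse embedding with control functions $\delta, \varrho$ as in Definition~\ref{def:coarse-embedding}. The idea is to choose a threshold $R$ large enough (depending on $r$ and $\varrho$) so that an $R$-disjoint decomposition in $\cY$ pulls back under any $f \in F$ to an $r$-disjoint decomposition in the source.

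First I would set $R = \varrho(r) + 1$ (any value strictly greater than $\varrho(r)$ works) and use the hypothesis that $\cY$ is $n$-decomposable over $\fC$ to obtain $\cW \in \fC$ such that every $Y \in \cY$ admits a decomposition $Y = Y_0 \cup \cdots \cup Y_n$ with $Y_i = \bigsqcup_{R\text{-disjoint}} Y_{ij}$ and each $Y_{ij} \in \cW$. For each $X \in \cX$, pick some $f \colon X \to Y$ in $F$, and define $X_i = f^{-1}(Y_i)$ and $X_{ij} = f^{-1}(Y_{ij})$. Then $X = X_0 \cup \cdots \cup X_n$ and, for each $i$, $X_i = \bigsqcup_j X_{ij}$. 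The key point to check is that the pieces $X_{ij}$ (for fixed $i$, varying $j$) are pairwise $r$-disjoint: if $x \in X_{ij}$, $x' \in X_{ij'}$ with $j \ne j'$, then $d_Y(f(x), f(x')) > R > \varrho(r)$, which combined with the upper inequality $d_Y(f(x), f(x')) \le \varrho(d_X(x, x'))$ and the monotonicity of $\varrho$ forces $d_X(x, x') > r$.

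Next I would assemble the family $\cZ = F^{-1}(\cW) = \{f^{-1}(W) \mid W \in \cW,\, f \in F\}$, equipped with the subspace metrics inherited from the domains. By construction, each piece $X_{ij}$ used above belongs to $\cZ$, so $\cX$ is $(r, n)$-decomposable over $\cZ$. It remains to show that $\cZ \in \fC$. The restriction of $F$ to $\cZ$, sending each $f^{-1}(W) \hookrightarrow X \xrightarrow{f} Y$ with image inside $W$, is a map of metric families $\cZ \to \cW$; it inherits the control functions $\delta, \varrho$ of $F$ since both inequalities in Definition~\ref{def:coarse-embedding}(1) are preserved by restriction to subspaces. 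Hence it is a coarse embedding of $\cZ$ into $\cW \in \fC$, and Coarse Permanence gives $\cZ \in \fC$.

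Since $r > 0$ was arbitrary, this shows $\cX$ is $n$-decomposable over $\fC$, proving the first statement. The ``in particular'' assertion about coarse equivalence follows immediately by applying the first statement in both directions. The main (mild) obstacle is merely the bookkeeping of choosing $R$ in terms of the upper control function $\varrho$ so that $R$-disjointness downstairs translates to $r$-disjointness upstairs; no use is made of the lower control function $\delta$, which reflects the fact that $n$-decomposability is a ``largeness'' condition insensitive to how a space sits inside a larger one.
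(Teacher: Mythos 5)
Your proof is correct and takes essentially the same route as the paper, which does not prove the theorem itself but defers to \cite[Theorem 9.13]{Nicas-Rosenthal}: choose the disjointness parameter downstairs as (slightly more than) $\varrho(r)$, pull the decomposition back along the coarse embedding, and apply Coarse Permanence to the preimage family, which coarsely embeds into the decomposing family via the restricted maps with the same control functions.
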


Since any two proper left invariant metrics on a countable group yield coarsely equivalent metric spaces (\cite[Proposition 1.1]{DranishnikovSmith}), we can make the following definition.

\begin{definition}\label{def:groups-in-C}
Let $\fC$ be a collection of metric families that satisfies Coarse Permanence. A countable group $G$ is said to be an element of $\fC$ if it is an element of $\fC$ with respect to any proper left invariant metric on $G$. A countable group $G$ is said to be $n$-decomposable over $\fC$ if it is $n$-decomposable over $\fC$ with respect to any proper left invariant metric on~$G$.
\end{definition}

Guentner, Tessera and Yu proved that both $\fD$ and $\wD$ satisfy Coarse Permanence \cite[Coarse Invariance 3.1.3]{Guentner_Tessera_Yu2}. It is straightforward to check that the following collections of metric families also satisfy Coarse Permanence.

\begin{example}\label{ex:ce} 
Collections of metric families that satisfy Coarse Permanence:
	\begin{enumerate}
		\item $\fB$, the collection of bounded metric families.
		\item $\fA$, the collection of metric families with finite asymptotic dimension.
		\item $\fA_n$, the collection of metric families with asymptotic dimension at most $n$.
		\item $\fE$, the collection of exact metric families (see Definition~\ref{def:exact}).
		\item $\fH$, the collection of metric families that are {\it coarsely embeddable into Hilbert space} (see Definition~\ref{def:coarse-embedding-Hilbert}).
		\end{enumerate}
\end{example}

\begin{definition}\label{def:exact}
	A metric family $\cX=\{X_\alpha \, | \, \alpha \in I\}$ is {\em exact} if for every $R>0$ and every $\varepsilon>0$, each $X_\alpha \in \cX$ admits a partition of unity $\{\phi_{U}^\alpha \}$ subordinate to a cover $\cU_\alpha$ such that:
	\begin{enumerate}
		\item[(i)] $\forall \alpha\in I$, $\forall x,x'\in X_\alpha$, $d_\alpha(x,x')\leq R \; \Rightarrow \; \displaystyle\sum_{U \in \cU_\alpha}\left| \phi_{U}^\alpha(x)-\phi_{U}^\alpha(x') \right|< \varepsilon$; and
		\item[(ii)] $\bigcup_{\alpha\in I}\cU_\alpha$ is a bounded metric family.
	\end{enumerate}
\end{definition}

\begin{definition}\label{def:coarse-embedding-Hilbert}
	A metric family $\cX=\{X_\alpha \, | \, \alpha \in I\}$ is {\em coarsely embeddable into Hilbert space}\footnote{The notion of a metric family that is coarsely embeddable into Hilbert space was introduced by Dadarlat and Guentner in~\cite{Dadarlat_Guentner1}, although they called it a ``family of metric spaces that is equi-uniformly embeddable."} if there is a family of Hilbert spaces $\cH=\{ H_\alpha  \, | \, \alpha \in I\}$ and a map of metric families $F=\{F_\alpha:X_\alpha \to H_\alpha \, | \, \alpha \in I\}$ such that $F:\cX \to \cH$ is a coarse embedding. The collection of all metric families that are coarsely embeddable into Hilbert space is denoted by $\fH$. 
\end{definition}

Exactness was introduced by Guentner and is closely related to Yu's {\it Property A} (see \cite[Section 5.2]{Guentner} for a discussion).
One of the goals of Yu's definition was to obtain a property that would imply coarse embeddability into Hilbert space and that is relatively easy to verify.
In particular, $\fE \subset \fH$. 
These concepts arose in conjunction with Yu's highly impactful work on the Novikov Conjecture, \cite{Yu, Yu2000, STY}.

\begin{definition}\label{def:amalgam}
	A collection of metric families, $\fC$, satisfies \emph{Finite Amalgamation Permanence} if the following holds. If $\cX=\bigcup_{i=1}^n\cX_i$ and each $\cX_i \in \fC$, then $\cX \in \fC$.
\end{definition}

It follows from~\cite[Fibering Theorem 3.1.4]{Guentner_Tessera_Yu2} that the collection of metric families with finite decomposition complexity, $\fD$, and the collection of metric families with weak finite decomposition complexity, $\wD$, satisfy Finite Amalgamation Permanence (also see \cite[Theorem 5.6]{KNR} for a proof). 

A basic property of asymptotic dimension is that if $A$ and $B$ are metric subspaces of some larger metric space, then $\asdim(A\cup B)={\rm max} \big\{\asdim(A),\asdim(B) \big\}$. This is also true for metric families, a property known as {\it Finite Union Permanence}.

\begin{definition}\label{def:finunion}
	A collection of metric families, $\fC$, satisfies \emph{Finite Union Permanence} if the following holds. Let $n \in \bN$ and let $\cX, \cX_1, \dots, \cX_n$ be metric families. If every $\cX_i \in \fC$ and for each $X\in \cX$ there exist $X_i\in \cX_i$, $1\leq i \leq n$, such that $X=\bigcup_{i=1}^n X_i$, then $\cX \in \fC$.
\end{definition}

We have the following elementary fact.

\begin{lemma}\label{lem:weak-decomp-finite-union}
Let $\fC$ be a collection of metric families that satisfies Finite Amalgamation Permanence and is stable under weak decomposition. Then $\fC$ satisfies Finite Union Permanence.
\end{lemma}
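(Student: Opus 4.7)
The plan is to show that $\cX$ is weakly decomposable over $\fC$ and then invoke stability under weak decomposition to conclude $\cX\in\fC$.

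First I would form the amalgamated family $\cY = \bigcup_{i=1}^n \cX_i$. Since each $\cX_i$ lies in $\fC$, Finite Amalgamation Permanence (\ref{def:amalgam}) yields $\cY \in \fC$. This single family $\cY$ will serve as the ``target'' of the decomposition for \emph{every} scale $r>0$.

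Next I would check that $\cX$ is $(r,n-1)$-decomposable over $\cY$ for every $r>0$. Given $X \in \cX$, by hypothesis there exist $X_i \in \cX_i \subseteq \cY$ with $X = X_1 \cup \cdots \cup X_n$. Setting $X_{i-1}' := X_i$ for $i = 1,\dots,n$, each piece $X_{i-1}'$ is a single element of $\cY$ and hence trivially an $r$-disjoint union (of one piece) of elements of $\cY$. Thus the decomposition $X = X_0' \cup X_1' \cup \cdots \cup X_{n-1}'$ satisfies the requirements of Definition~\ref{def:r-decompose} for \emph{any} $r>0$. Since the same $\cY \in \fC$ works uniformly in $r$, by Definition~\ref{def:n-decompose} the family $\cX$ is $(n-1)$-decomposable over $\fC$, and therefore weakly decomposable over $\fC$.

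Finally, since $\fC$ is stable under weak decomposition, we conclude $\cX \in \fC$. There is no real obstacle here: the proof is essentially an unpacking of the definitions, with the key observation being that a piece of a decomposition can always be regarded as a degenerate $r$-disjoint union of a single set, so no actual separation at scale $r$ is required when the target family already contains the pieces.
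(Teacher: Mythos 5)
Your argument is correct and is essentially the paper's own proof: amalgamate the families $\cX_i$ into $\cY\in\fC$ via Finite Amalgamation Permanence, observe that the given finite unions make $\cX$ $(r,n-1)$-decomposable over $\cY$ for every $r>0$ (the paper states this, slightly less carefully, as $(r,n)$-decomposability), and conclude by stability under weak decomposition. No gaps.
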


\begin{proof}
Let $\cX, \cX_1, \dots, \cX_n$ be metric families where every $\cX_i$ is in $\fC$ and for each $X\in \cX$ there exist $X_i\in \cX_i$, $1\leq i \leq n$, such that $X=\bigcup_{i=1}^n X_i$. Then, using Definition~\ref{def:r-decompose}, $\cX$ is $(r,n)$-decomposable over the metric family $\bigcup_{i=1}^n\cX_i$, which is in $\fC$ by Finite Amalgamation Permanence. Therefore, $\cX$ is $n$-decomposable over $\fC$ and thus, since $\fC$ is stable under weak decomposition, $\cX$ is in $\fC$. 
\end{proof}

The fact that the collection of metric families with asymptotic dimension at most $k$ satisfies Finite Union Permanence \cite[Theorem 6.3]{Guentner} gives the following straightforward application.

\begin{theorem}\label{thm:asdim-cosets-metric-spaces}
Let $G$ be a group, $X$ be a metric space with a $G$-invariant metric and $m$ be a natural number. Let $X_1, \dots, X_m$ be metric subspaces of $X$ and $\cX_m$ be the metric family $\big\{ \bigcup_{i=1}^m g_{i}X_{i} \;\big|\; g_{1}, \dots, g_m \in G \big\}$, where each $\bigcup_{i=1}^m g_{i}X_{i}$ is considered as a metric subspace of $X$. Then $\asdim(\cX_m) = {\rm max} \big\{ \asdim(X_i) \; \big| \; 1 \leq i \leq m \big\}.$
\qed	
\end{theorem}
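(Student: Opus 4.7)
The plan is to establish the two inequalities separately. Set $k := \max_{1 \leq i \leq m} \asdim(X_i)$; if $k = \infty$ the upper bound is vacuous, so assume $k < \infty$.

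For the lower bound $\asdim(\cX_m) \geq k$, take $g_1 = \cdots = g_m = e$ to see that $Y_0 := \bigcup_{i=1}^m X_i$ is itself an element of $\cX_m$. Any family decomposition witnessing $\asdim(\cX_m) \leq n$ restricts to a decomposition of the single space $Y_0$, so $\asdim(Y_0) \leq \asdim(\cX_m)$. Since each $X_j$ is a metric subspace of $Y_0$, subspace monotonicity of asymptotic dimension gives $\asdim(X_j) \leq \asdim(Y_0) \leq \asdim(\cX_m)$, and taking the maximum over $j$ yields the desired inequality.

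For the upper bound $\asdim(\cX_m) \leq k$, the central step is to show that, for each $i$, the orbit family
\[ \cY_i := \{\, gX_i \mid g \in G \,\}, \]
with each $gX_i$ given the subspace metric from $X$, satisfies $\asdim(\cY_i) \leq k$. Because the metric on $X$ is $G$-invariant, left translation by any $g \in G$ restricts to an isometry $X_i \to gX_i$. Given $r > 0$, choose an $(r,k)$-decomposition $X_i = X_i^0 \cup \cdots \cup X_i^k$ with each $X_i^j$ a disjoint union of $r$-disjoint pieces drawn from a uniformly bounded family $\cB_i$ of subsets of $X_i$. Translating by $g$ sends this to an $(r,k)$-decomposition of $gX_i$ whose pieces lie in $\{\, gB \mid g \in G,\ B \in \cB_i \,\}$, a family with the same diameter bound as $\cB_i$. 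Thus $\cY_i$ is $(r,k)$-decomposable over a bounded metric family for every $r > 0$, proving $\asdim(\cY_i) \leq k$, i.e., $\cY_i \in \fA_k$.

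To finish, observe that every $Y \in \cX_m$ has the form $Y = \bigcup_{i=1}^m Y_i$ with $Y_i \in \cY_i$, so applying Finite Union Permanence for the collection $\fA_k$ (Example~\ref{ex:ce}(3), which is \cite[Theorem 6.3]{Guentner}) gives $\cX_m \in \fA_k$, hence $\asdim(\cX_m) \leq k$. The only genuinely geometric input is the translation-invariance argument showing $\asdim(\cY_i) \leq k$; everything else is formal, via subspace monotonicity and Finite Union Permanence.
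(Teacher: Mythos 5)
Your argument is correct and follows exactly the route the paper intends: the paper states this result as a ``straightforward application'' of Finite Union Permanence for $\fA_k$ (\cite[Theorem 6.3]{Guentner}), and your proof simply fills in the details --- $G$-invariance of the metric makes each $gX_i$ isometric to $X_i$, so the orbit families $\cY_i$ lie in $\fA_k$ uniformly, and Finite Union Permanence plus the easy monotonicity lower bound give the equality. Nothing is missing, and the approach is essentially the paper's own.
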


Theorem~\ref{thm:asdim-cosets-metric-spaces} will be applied to groups in the proof of Theorem~\ref{thmA-asdim}. The precise statement we use is:

\begin{corollary}\label{thm:asdim-cosets-family}
	Let $G$ be a countable group with a proper left invariant metric, and let $m$ be a natural number. Let $H_1, \dots, H_m$ be subgroups of $G$ and $\cH_m$ be the metric family $\big\{ \bigcup_{i=1}^m g_{i}H_{i} \;\big|\; g_{1}, \dots, g_m \in G \big\}$, where each $\bigcup_{i=1}^m g_{i}H_{i}$ is considered as a metric subspace of $G$. Then $\asdim(\cH_m) = {\rm max} \big\{ \asdim(H_i) \; \big| \; 1 \leq i \leq m \big\}.$\qed	
\end{corollary}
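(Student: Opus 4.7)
The plan is to deduce this directly from Theorem~\ref{thm:asdim-cosets-metric-spaces} by taking $X = G$ and $X_i = H_i$. The first thing I would verify is that the hypothesis of Theorem~\ref{thm:asdim-cosets-metric-spaces} is satisfied: a proper left invariant metric $\rho$ on $G$ is, by definition, invariant under the left action of $G$ on itself, so $X = G$ is a metric space equipped with a $G$-invariant metric.

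Next I would address the only subtlety, namely that the asymptotic dimension appearing in the right-hand side of Theorem~\ref{thm:asdim-cosets-metric-spaces} is that of $H_i$ as a metric subspace of $(G,\rho)$, whereas the statement of the corollary interprets $\asdim(H_i)$ in the sense of Definition~\ref{def:groups-in-C}, i.e.\ with respect to some proper left invariant metric on the group $H_i$. To reconcile these, I would note that the restriction $\rho|_{H_i \times H_i}$ is itself a proper left invariant metric on $H_i$: properness is inherited because every ball in $(H_i, \rho|_{H_i})$ is the intersection of a ball in $(G,\rho)$ with $H_i$, hence finite; and left invariance under the $H_i$-action is a special case of left invariance under the $G$-action. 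By \cite[Proposition 1.1]{DranishnikovSmith} (cited in the paper), any two proper left invariant metrics on $H_i$ give coarsely equivalent metric spaces, so $\asdim(H_i)$ computed using $\rho|_{H_i}$ agrees with the intrinsic asymptotic dimension of the group $H_i$.

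With these observations in place, Theorem~\ref{thm:asdim-cosets-metric-spaces} applied to $X = (G,\rho)$ and the subspaces $H_1,\dots,H_m$ yields
\[
\asdim(\cH_m) \;=\; \max\bigl\{ \asdim(H_i) \;\big|\; 1 \leq i \leq m \bigr\},
\]
which is precisely the claimed equality. I do not anticipate any real obstacle here; the content of the corollary is essentially a translation of Theorem~\ref{thm:asdim-cosets-metric-spaces} into the group-theoretic setting, and the only point requiring a sentence of justification is the identification of the subspace metric on $H_i$ with a proper left invariant metric on $H_i$.
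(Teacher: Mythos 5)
Your proposal is correct and matches the paper's treatment: the corollary is stated there as an immediate specialization of Theorem~\ref{thm:asdim-cosets-metric-spaces} with $X=G$ (a proper left invariant metric is in particular $G$-invariant) and $X_i=H_i$. Your additional remark that the restricted metric on $H_i$ is itself a proper left invariant metric, so that $\asdim(H_i)$ is unambiguous by \cite[Proposition 1.1]{DranishnikovSmith}, is a sensible clarification but does not change the argument.
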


The next result is needed to establish Theorem~\ref{thmA-asdim}. 

\begin{theorem}\label{thm:asdim-pullback}
If $\cX$ is a metric family that is $n$-decomposable over $\fA_m$ (the collection of metric families with asymptotic dimension at most $m$) then $\asdim(\cX) \leq m+n$.
\end{theorem}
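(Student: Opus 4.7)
The plan is to verify, for each $r > 0$, that $\cX$ admits an $(r, n+m)$-decomposition over $\fB$, which is exactly the statement $\asdim(\cX) \leq n+m$. Fix $r > 0$. By hypothesis, there exist a metric family $\cY \in \fA_m$ and an $(r, n)$-decomposition of $\cX$ over $\cY$: every $X \in \cX$ decomposes as $X = X_0 \cup \cdots \cup X_n$ with $X_i = \bigsqcup_{r\text{-disjoint}} X_{ij}$ and $X_{ij} \in \cY$. Since $\asdim(\cY) \leq m$, there is a uniform constant $D > 0$ such that each $X_{ij}$ admits an $(r, m)$-decomposition $X_{ij} = X_{ij}^0 \cup \cdots \cup X_{ij}^m$ with $X_{ij}^k = \bigsqcup_{r\text{-disjoint}} X_{ij,l}^k$ and $\diam(X_{ij,l}^k) \leq D$.

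A first routine observation is that the $r$-disjointness structure propagates through the layer decomposition: for each pair $(i, k)$, the set $A_{i,k} := \bigsqcup_{j,l} X_{ij,l}^k$ is an $r$-disjoint union of pieces of diameter at most $D$, because pieces within a common $X_{ij}^k$ are $r$-disjoint by construction and pieces from distinct $X_{ij}$'s are $r$-disjoint since the $X_{ij}$'s themselves are $r$-disjoint in $X$. This already produces an $(r, (n+1)(m+1)-1)$-decomposition of $X$ over $\fB$, but this is weaker than what is claimed.

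To sharpen to the target $(r, n+m)$-decomposition, I would induct on $n$. The base case $n = 0$ coincides with the previous paragraph, yielding an $(r, m)$-decomposition directly. For the inductive step, the set $X' := X_0 \cup \cdots \cup X_{n-1}$ inherits an $(r, n-1)$-decomposition over $\cY$; by the inductive hypothesis it admits an $(r, n+m-1)$-decomposition over $\fB$ with some uniform bound $D'$, while the remaining layer $X_n$ admits an $(r, m)$-decomposition by the base case (applied with $n=0$). Fusing these two decompositions of $X = X' \cup X_n$ is carried out via a scale-adjusted Finite Union Permanence argument invoking Theorem~\ref{thm:asdim-cosets-metric-spaces}, in the spirit of the Bell--Dranishnikov Hurewicz-type theorem for asymptotic dimension.

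The principal obstacle is executing this final fusion: a naive combination of the $(r, n+m-1)$-decomposition of $X'$ with the $(r, m)$-decomposition of $X_n$ would produce $n+2m+1$ color classes, and reducing the count to $n+m+1$ is the hard part. The standard workaround is to re-run the inductive decomposition of $X'$ at a sufficiently enlarged scale $r' > r$ (chosen to exceed $r + 2D_n$, where $D_n$ is the uniform diameter bound on pieces of the $X_n$-decomposition) and then apply a thickening-and-merging argument to absorb the $X_n$-pieces into the enlarged $X'$-decomposition without increasing the number of classes beyond $n+m+1$. Careful bookkeeping of the uniform bounds produced at each induction stage is required for the induction to close.
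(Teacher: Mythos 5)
Your first step (the $(r,(n+1)(m+1)-1)$-decomposition) is correct but, as you note, far from the claim; the problem is that the proposed sharpening does not close. The induction on $n$ is not legitimate as set up: the colour classes $X_0,\dots,X_n$ are produced only after a scale $r$ (or $r'$) has been fixed, so the subset $X'=X_0\cup\cdots\cup X_{n-1}$ is a single-scale object. The inductive hypothesis (the theorem for $n-1$) applies only to families that are $(n-1)$-decomposable over $\fA_m$, i.e.\ that admit such decompositions at \emph{every} scale, and nothing in the hypotheses says that this particular $X'$ does; if you instead ``re-run the decomposition at an enlarged scale $r'$'' you get a \emph{different} partition of $X$ with different colour classes, so there is no fixed $X'$ to which the inductive conclusion and the scale-$r'$ decomposition simultaneously apply. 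Moreover, the fusion step you defer to a ``standard thickening-and-merging argument'' is precisely the entire content of the theorem, and the standard saturation argument behind the finite union theorem does not perform it: that argument merges two decompositions colour-by-colour and needs the same number of colours on both sides (and an all-scales dimension bound on the side decomposed at the large scale). Applied naively to your asymmetric situation it would even ``prove'' $\asdim(\cX)\le\max(n+m-1,m)$, which is false in general and signals that the approach is not merely incomplete but headed in the wrong direction; indeed, the union-theorem route is what Guentner--Tessera--Yu use and it only yields exponential-type bounds (cf.\ the Remark following Theorem~\ref{thm:asdim-pullback}).

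The paper's proof avoids all of this by working at a single scale with a multiplicity-boosting (Kolmogorov-trick) argument, Lemma~\ref{lem:asdim-pullback}: by induction on $k$ one arranges $X=X_0\cup\cdots\cup X_{n+k}$ so that every point lies in at least $k+1$ of the classes, each class is an $r$-disjoint union of pieces, and the family of all pieces is still in $\fA_m$ (the new class is built from $(k+1)$-fold intersections of pieces minus the $r$-neighborhoods of the unused classes). Taking $k=m$, one then covers each piece using an $(m,m+n+1)$-dimensional control function (\cite[Theorem 2.4]{BDLM}), so that every point of a piece meets at least $n+1$ of the $m+n+1$ colours there; since every point of $X$ lies in at least $m+1$ of the $m+n+1$ classes $X_i$, the pigeonhole principle on the index set $\{0,\dots,m+n\}$ produces a common index, and the resulting $m+n+1$ colour classes $\cV_i$ are $r$-disjoint, uniformly bounded, and cover $X$. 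If you want to salvage your write-up, you should replace the induction-plus-fusion scheme with an argument of this multiplicity/pigeonhole type (or prove an analogue of Lemma~\ref{lem:asdim-pullback} yourself); as it stands, the key step is missing.
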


\begin{remark}
In \cite[Proof of Theorem 4.1]{Guentner_Tessera_Yu2}, it is shown that the collection $\fD_n$ (used by Guentner, Tessera and Yu in the ordinal definition of FDC, \cite[Definition 2.2.1]{Guentner_Tessera_Yu2}) is contained in $\fA_{2^n-1}$. Theorem~\ref{thm:asdim-pullback} implies that $\fD_n$ is in fact contained in $\fA_n$.
\end{remark}

The proof of Theorem~\ref{thm:asdim-pullback}, which begins with the following lemma, is an adaptation of the Kolmogorov trick used in \cite{BDLM} to study the asymptotic dimension of metric spaces. 

We denote the number of elements of a finite set $F$ by $\#F$.

\begin{lemma}\label{lem:asdim-pullback}
Let $\cX$ be a metric family that is $n$-decomposable over $\fA_m$ and
let  $k$ be a non-negative integer.
Given $r >0$, for each $X \in \cX$
there is a decomposition  $X = X_0 \cup \cdots \cup X_{n+k}$  such that,
\begin{enumerate}
\item[(i)] for every $x \in X$,  $\#T_x \geq k+1$ where $T_x = \{ i ~|~ x \in X_i \} \subset \{ 0, 1, \ldots, n+k\}$;

\item[(ii)] for each $i$, $X_i=\bigsqcup_{\text{{\rm $r$-disjoint}}} \, X_{ij}$; and 

\item[(iii)] $\cX^\ast = \{ X_{ij} ~|~ \text{all $i,j$ and } X \in \cX \} \in \fA_m$.
\end{enumerate}
\end{lemma}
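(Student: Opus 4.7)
My plan is to adapt a Kolmogorov cyclic-shift construction to the outer $n$-decomposition provided by the hypothesis, to subdivide each resulting set via its $r$-connected components, and then to verify the required properties using the permanence features of $\fA_m$ developed earlier in the section, namely Coarse Permanence (see Example~\ref{ex:ce}) and Finite Union Permanence, which holds for the collection of metric families of asymptotic dimension at most $m$ by \cite[Theorem 6.3]{Guentner}.

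First, I would apply the $n$-decomposability of $\cX$ over $\fA_m$ at a scale $r_0 \geq r$ (to be chosen if needed) to obtain, for each $X \in \cX$, a decomposition $X = Y_0 \cup \cdots \cup Y_n$ with $Y_t = \bigsqcup_{r_0\text{-disjoint}} Y_{t,\ell}$ and $\cY = \{Y_{t,\ell}\} \in \fA_m$. Padding with $Y_{n+1} = \cdots = Y_{n+k} = \emptyset$, I would define, for $i \in \{0, 1, \ldots, n+k\}$,
\[
X_i = \bigcup_{s=0}^{k} Y_{(i-s) \bmod (n+k+1)}.
\]
A straightforward counting argument then shows that each $Y_t$ with $t \leq n$ appears in exactly $k+1$ of these sets, so every $x \in X$ lies in at least $k+1$ of them, establishing (i).

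Next, for (ii), I would take $X_{ij}$ to be the $r$-connected components of $X_i$, where two constituent pieces $Y_{t,\ell}, Y_{t',\ell'} \subseteq X_i$ are declared $r$-adjacent when $d(Y_{t,\ell}, Y_{t',\ell'}) < r$. Distinct components are then separated by at least $r$, giving $X_i = \bigsqcup_{r\text{-disjoint}} X_{ij}$.

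The main obstacle is (iii): proving $\cX^\ast = \{X_{ij}\} \in \fA_m$. Each $X_{ij}$ decomposes as $\bigcup_{t \in S_i}(X_{ij} \cap Y_t)$ for a set $S_i$ of cardinality at most $k+1$, with each intersection $X_{ij} \cap Y_t$ an $r_0$-disjoint union of subsets of pieces from $\cY$. Coarse Permanence immediately places $\{X_{ij} \cap Y_{t,\ell}\}$ in $\fA_m$. The technical heart of the argument will be a careful choice of $r_0$ together with a delicate use of the $\asdim \leq m$ structure of each $Y_{t,\ell}$ at the appropriate scale, so as to control the disjoint union over $\ell$ and show that $\{X_{ij} \cap Y_t\} \in \fA_m$; Finite Union Permanence will then yield (iii) by combining the at most $k+1$ intersections. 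I expect this last step to require the most care, since naive disjoint-union arguments can easily fail to preserve the asymptotic-dimension bound, and the interaction between the scales $r$ and $r_0$ must be chosen so that the $r$-connected components do not inflate the complexity of the resulting sets.
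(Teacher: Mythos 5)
Your step (i) is fine, but the plan for (iii) contains a genuine gap, and it is not merely a technical one: the statement you hope to prove in the ``technical heart'' is false. The hypothesis of $n$-decomposability only gives, for each fixed scale $r_0$, a decomposition whose pieces are $r_0$-disjoint; disjointness at a single scale gives no control of asymptotic dimension, because $\asdim$ is a statement about all scales. Concretely, take $\cX=\{\R^2\}$, $n=m=1$, $k=1$. For any $r_0$ one may decompose $\R^2$ as $Y_0\cup Y_1$, where $Y_0$ (resp.\ $Y_1$) is the union of the even (resp.\ odd) horizontal strips of width $3r_0$; the strips form a family of asymptotic dimension $1$ and are $r_0$-disjoint within each color. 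In your cyclic-shift construction with $n+k+1=3$ and $Y_2=\emptyset$, one of the sets is $X_1=Y_0\cup Y_1=\R^2$, which is $r$-connected, so the corresponding $X_{1j}$ is all of $\R^2$ and $\cX^\ast\notin\fA_1$. Even your intermediate family $\{X_{ij}\cap Y_t\}$ fails: $Y_0$ itself is coarsely dense in $\R^2$, hence has asymptotic dimension $2>m$. No choice of $r_0$ repairs this, because the failure occurs at scales larger than $r_0$, and Coarse Permanence plus Finite Union Permanence cannot lower the dimension of a set that is already too big. The essential point your construction misses is that the new pieces must be \emph{subsets} of the old pieces (or controlled thickenings of them), never unions of unboundedly many old pieces at a fixed separation scale.

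The paper's proof avoids this by inducting on $k$ rather than applying the decomposition once: assuming the statement for $k$ at scale $3r$, it thickens each piece $X_{ij}$ to its $r$-neighborhood $X'_{ij}$ (still $r$-disjoint within each color, and still in $\fA_m$ by Coarse Permanence), and then adds a single new color $X'_{n+k+1}$ whose pieces are $(k+1)$-fold intersections $X_{i_1j_1}\cap\cdots\cap X_{i_{k+1}j_{k+1}}$ with the $r$-neighborhoods of all other colors removed. These new pieces are subsets of old pieces, so $\cX^\ast$ stays in $\fA_m$, they are shown directly to be $r$-disjoint, and any point lying in only $k+1$ of the thickened colors is swept into the new color, raising the multiplicity to $k+2$. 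If you want to salvage your approach, you should abandon the one-shot cyclic shift and the $r$-connected-component subdivision, and instead build the extra multiplicity one color at a time in this fashion, re-invoking the decomposition hypothesis (via the induction hypothesis) at a larger scale at each step.
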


\begin{proof}

The proof proceeds  by induction on $k$ (our method is adapted from the proof of \cite[Theorem 2.4]{BDLM}).
The base case of the induction, $k=0$, is the given assertion that the metric family $\cX$ is $n$-decomposable over $\fA_m$.
Assume that the claim is valid for the integer $k$.
Let $r>0$.
For each $X\in \cX$, there is  a decomposition  $X = X_0 \cup \cdots \cup X_{n+k}$  such that,
for every $x \in X$,  $\#T_x \geq k+1$ and
$X_i=\bigsqcup_{\text{$3r$-disjoint}} \, X_{ij}$   and
\[
\cX^\ast = \{ X_{ij} ~|~ \text{all $i,j$ and } X \in \cX \} \in \fA_m.
\]
Let $X'_{ij}$ be the $r$-neighborhood of $X_{ij}$, that is,
\[
X'_{ij}  = \{ y ~|~ \text{there exists } x \in X_{ij} \text{ such that } d(x,y)< r\}.
\]
For  $0\leq i \leq n+k$,
let $\cU'_{i} = \{ X'_{ij}  ~|~ j\}$ and  $X'_i = \bigcup \,\cU_i'$.
Observe that  $\cU'_{i}$ is $r$-disjoint.
Let $\cU'_{n+k+1}$ be the collection of subspaces of the form
\[
X_{I,J} = X_{i_1 j_1} \cap \cdots \cap X_{i_{k+1} j_{k+1}}  ~\setminus ~ \bigcup_{i \notin \{i_1, \ldots, i_{k+1}\}} X'_i
\]
where $I = \{i_1, \ldots, i_{k+1}\}$ consists of $k+1$ distinct elements of  $\{0,\ldots,n+k\}$  and  \linebreak 
$J = \{j_1, \ldots, j_{k+1}\}$.
Let $I' = \{i'_1, \ldots, i'_{k+1}\}$ be another set of $k+1$ distinct elements of  $\{0,\ldots,n+k\}$.
If $I = I'$ then clearly any two distinct sets of the form $X_{I,J}$ and $X_{I',J'}$ are $r$-disjoint.
Suppose $I \neq I'$,  $a \in X_{I,J}$, $b \in X_{I',J'}$ and $d(a,b) < r$.
Since $I \neq I'$ there exists $i'_\ell \notin \{i_1, \ldots, i_{k+1}\}$.
Note that $b \in X_{i'_\ell}$.
However, by definition of $a\in X_{I,J}$, $a \notin X'_{i'_\ell}$. This implies that $d(a,b) \geq r$, a contradiction. 
Hence $X_{I,J}$ and $X_{I',J'}$ are $r$-disjoint.
This shows that $\cU'_{n+k+1}$ is $r$-disjoint.
Also note that $\cU'_{n+k+1}\in \fA_m$.

Let $X'_{n+k+1} = \bigcup \, \cU'_{n+k+1}$.
Observe that $X = X'_0 \cup \cdots \cup X'_{n+k+1}$.
For $x \in X$, let $S_x = \{ \ell ~|~  x \in X'_\ell \text{ and } 0 \leq \ell \leq n+k \}$ and $T'_x =  \{ \ell ~|~  x \in X'_\ell, \,\, 0 \leq \ell \leq n+k+1 \}$.
If $\#S_x \geq k+2$, then $\#T_x' \geq k+2$, so we are done.
Suppose $\#S_x = k+1$. 
Then for some $J$, $x \in X_{i_1j_1}\cap \cdots \cap X_{i_{k+1}j_{k+1}}$, where $i_l\in T_x\subset S_x$ and $j_l\in J$. If $x\notin X_{S_X,J}$, then $x \in X'_\ell$ for some $\ell \notin S_x$, a contradiction.
Hence, $x \in X_{S_X,J} \subset X'_{n+k+1}$ and so $\# T'_x \geq k + 2$.
This completes the induction step.
\end{proof}

\begin{proof}[Proof of Theorem~\ref{thm:asdim-pullback}]

\noindent
By Lemma \ref{lem:asdim-pullback},
given $r >0$, for each $X \in \cX$
there is a decomposition 
$X = X_0 \cup \cdots \cup X_{n+m}$  such that,
\begin{enumerate}
\item[(i)] for every $x \in X$,  $\#T_x \geq k+1$ where $T_x = \{ i ~|~ x \in X_i \} \subset \{ 0, 1, \ldots, n+k\}$;

\item[(ii)] for each $i$, $X_i=\bigsqcup_{\text{{\rm $r$-disjoint}}} \, X_{ij}$; and 

\item[(iii)] $\cX^\ast = \{ X_{ij} ~|~ \text{all $i,j$ and } X \in \cX \} \in \fA_m$.
\end{enumerate}
By \cite[Theorem 2.4]{BDLM},
there is an $(m,m+n+1)$-dimensional control function, $\cD$, for $\cX^\ast$, 
that is, for  $r>0$ as above there are covers $\cU^{ij}$ of $X_{ij} \in \cX^\ast$ such that,
\begin{enumerate}
\item[(1)]  $\cU^{ij} = \cU^{ij}_0 \cup \cdots \cup \cU^{ij}_{m +n}$,

\item[(2)] $\cU^{ij}_{\ell}$ is $r$-disjoint and  $\cD(r)$ bounded,

\item[(3)] For every $x \in X_{ij}$,  $\#S_x \geq n+1$ where
$
S_x = \{ \ell ~|~  \text{$x$ belongs to an element of $\cU^{ij}_\ell$} \}.
$
\end{enumerate}

Let $\cV_i = \bigcup_j \cU^{ij}_i$ for $i =0, \ldots, m+n$.  Note that $\cV_i$ is $r$-disjoint and $\cD(r)$-bounded.
For $x \in X \in \cX$, let 
$
S'_x = \{ \ell ~|~   \text{$x$ belongs to $\cU^{ij}_\ell$ for some $i,j$}    \}.
$
Recall that $\#T_x \geq m+1$, where $T_x = \{ i ~|~ x \in X_i \}$. 
Note that $S_x \subset S'_x$ and so $\#S'_x \geq n+1$.
Hence $S'_x \cap T_x \neq \emptyset$ because $S'_x$ and $T_x$ are subsets of $\{0, \ldots, m+n\}$, a set with $m+n+1$ elements.
Since $x \in X_{ij}$ implies that $S_x \subset S_x'$, it follows that for $\mu \in S'_x \cap T_x$, there exists $j$ and $U \in \cU^{\mu j}_\mu$ with $x \in U$.
This shows that $\cV =  \cV_0 \cup \cdots \cup  \cV_{m+n}$  is a cover of $\cX$.
It follows that $\asdim(\cX) \leq m+n$.
\end{proof}

We conclude this section by stating three alternative definitions (established in~\cite{Nicas-Rosenthal}) for a metric family $\cX$ to be $n$-decomposable over a collection of metric families $\fC$. 
Condition~(C) is a key technical tool needed for the proofs of Theorems~\ref{thmA-asdim},~\ref{thmAn} and~\ref{thmA}.

In what follows, let $\cX=\{X_\alpha \, | \, \alpha \in I\}$ be a metric family, where $I$ is a countable indexing set, and let $\fC$ be a collection of metric families. Let $n$ be a non-negative integer.

\begin{conditionA}\label{A}
	For every $d>0$, there exists a cover $\cV_\alpha$ of $X_\alpha$, for each $\alpha \in I$, such that:
	\begin{enumerate}	
	 	\item[(i)] the $d$-multiplicity of $\cV_\alpha$ is at most $n+1$ for every $\alpha \in I$; and
		\item[(ii)] $\bigcup_{\alpha\in I}\cV_\alpha$ is a metric family in $\fC$.
	\end{enumerate}
\end{conditionA}

\begin{conditionB}\label{B}
	For every $\lambda>0$, there exists a cover $\cU_\alpha$ of $X_\alpha$, for each $\alpha \in I$, such that:
	\begin{enumerate}	
	 	\item[(i)] the multiplicity of $\cU_\alpha$ is at most $n+1$ for every $\alpha \in I$;
		\item[(ii)] the Lebesgue number $L(\cU_\alpha)\geq \lambda$ for every $\alpha \in I$; and
		\item[(iii)] $\bigcup_{\alpha\in I}\cU_\alpha$ is a metric family in $\fC$.
	\end{enumerate}
\end{conditionB}

\begin{conditionC}\label{C}
	For every $\varepsilon>0$, there exists a uniform simplicial complex $K_\alpha$ and an $\varepsilon$-Lipschitz map $\varphi_\alpha:X_\alpha \to K_\alpha$, for each $\alpha \in I$, such that:
	\begin{enumerate}	
	 	\item[(i)] $\dim(K_\alpha)\leq n$ for every $\alpha \in I$; and
		\item[(ii)] $\bigcup_{\alpha\in I}\big\{ \varphi_\alpha^{-1}\big(\st(v)\big) \; \big| \; v \in K_\alpha^{(0)} \big\}$ is a metric family in~$\fC$.
	\end{enumerate}
\end{conditionC}
(We recall the notion of a uniform simplicial complex and of  the open star of  a vertex $v$, $\text{star}(v)$,  in Section \ref{sec:main-thm}.)

The following result is proved in~\cite[Propositions 9.18 and 9.20]{Nicas-Rosenthal}.

\begin{proposition}\label{prop:equiv}
	Let $\cX$ be a metric family and $\fC$ be a collection of metric families that satisfies Coarse Permanence. Then Conditions~(A), (B) and (C) are each equivalent to Definition~\ref{def:n-decompose}.\qed	
\end{proposition}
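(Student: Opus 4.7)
The plan is to prove the cyclic chain
\[
\text{Def.~\ref{def:n-decompose}} \Longrightarrow \text{(A)} \Longrightarrow \text{(C)} \Longrightarrow \text{(B)} \Longrightarrow \text{Def.~\ref{def:n-decompose}},
\]
mirroring the classical equivalence of the anti-Cech, nerve-map, and Lebesgue-number formulations of ``$\asdim X \leq n$'' (as in Bell--Dranishnikov), and invoking Coarse Permanence at each transition so that the witnessing family always remains inside $\fC$.

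For Def.~$\Rightarrow$~(A), given $d>0$ apply the definition with $r := d+1$ and take $\cV_\alpha := \{X_\alpha^{i,j}\}_{i,j}$; since the pieces at each level $i$ are $r$-disjoint, any subset of diameter $\leq d$ meets at most one piece per level, yielding $d$-multiplicity $\leq n+1$, while $\bigcup_\alpha \cV_\alpha$ is a subfamily of the witnessing $\cY \in \fC$. For (A)~$\Rightarrow$~(C), given $\varepsilon>0$ apply (A) with $d$ of order $(n+1)/\varepsilon$, form Lipschitz bumps $\psi_V(x) := \max\bigl(0, d/3 - \mathrm{dist}(x,V)\bigr)$ for each $V \in \cV_\alpha$, normalize to a partition of unity $\phi_V = \psi_V / \sum_W \psi_W$, and assemble the barycentric coordinates into an $\varepsilon$-Lipschitz map $\varphi_\alpha$ into the nerve $K_\alpha$ of $\cV_\alpha$. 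The $d$-multiplicity bound forces $\dim K_\alpha \leq n$, and $\varphi_\alpha^{-1}(\st(v_V))$ sits inside the $(d/3)$-neighborhood of $V$, hence is coarsely equivalent to a subset of $V$; Coarse Permanence then places the star-preimage family in $\fC$.

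For (C)~$\Rightarrow$~(B), apply (C) with $\varepsilon := c/\lambda$, where $c>0$ is the uniform Lebesgue number of the open-star cover of a uniform simplicial complex, and set $\cU_\alpha := \{\varphi_\alpha^{-1}(\st(v)) \mid v \in K_\alpha^{(0)}\}$. The dimension bound on $K_\alpha$ gives multiplicity $\leq n+1$, the Lipschitz estimate combined with $c$ gives Lebesgue number $\geq \lambda$, and clause (ii) of (C) places $\bigcup_\alpha \cU_\alpha$ in $\fC$ directly. For (B)~$\Rightarrow$~Def., given $r>0$ apply (B) with $\lambda = 3r$, shrink each $U \in \cU_\alpha$ to $\hat U := \{x \in U \mid B(x,r) \subseteq U\}$ (the Lebesgue hypothesis keeps $\{\hat U\}$ a cover), and partition the $\hat U$ into $n+1$ classes that are pairwise $r$-disjoint within each class by a greedy coloring of the intersection graph whose vertices are the $\hat U$ and whose edges join pairs within distance $\leq r$ --- a variant of Ostrand's theorem, in which the $(n{+}1)$-multiplicity bound controls the chromatic number. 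Each $\hat U$ embeds in some $U \in \bigcup_\alpha \cU_\alpha \in \fC$, so Coarse Permanence delivers the required $(r,n)$-decomposition over a family in $\fC$.

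The step I expect to be the main obstacle is (B)~$\Rightarrow$~Def.: the greedy coloring must be carried out \emph{uniformly} over all $\alpha \in I$, so that the decomposition pieces all belong to a single subfamily of $\bigcup_\alpha \cU_\alpha$ rather than to separately chosen refinements indexed by $\alpha$. Once uniformity is established, the remaining bookkeeping is routine: every manipulation in the cycle (shrinking a set, passing to a neighborhood, or taking the preimage of an open star) replaces a set by one coarsely equivalent to a subset of an element of the existing witnessing family, so membership in $\fC$ propagates through the whole cycle via Coarse Permanence.
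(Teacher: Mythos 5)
The implications Def.~$\Rightarrow$~(A)~$\Rightarrow$~(C)~$\Rightarrow$~(B) in your cycle are essentially the standard Bell--Dranishnikov arguments transported to families, and your use of Coarse Permanence to keep the witnessing families in $\fC$ (star-preimages sit in uniform neighborhoods of the original pieces, with constants uniform over the index set) is the right mechanism; the only issues there are cosmetic (e.g.\ the nerve should be taken with respect to the $d/3$-neighborhoods, i.e.\ the supports of your partition of unity, and the parameter in Def.~$\Rightarrow$~(A) should be $r>2d$ if $d$-multiplicity is measured by $d$-balls). The genuine gap is the last step, (B)~$\Rightarrow$~Def. Your justification for the greedy coloring is that ``the $(n{+}1)$-multiplicity bound controls the chromatic number'' of the graph whose vertices are the shrunk sets $\hat U$ and whose edges join pairs at distance $\leq r$. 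That is false: multiplicity only controls cliques that are witnessed at a \emph{single} location (if several shrunk sets pairwise come within $r$ of each other \emph{near the same point}, that point lies in all the corresponding $U$'s), but edges of this graph can be witnessed at far-apart locations, so the clique number --- let alone the chromatic number --- is not bounded by the multiplicity. Concretely, in $\R$ with $r=1$, $\lambda=3$, take $U_1=(0,20)\cup(53,80)$, $U_2=(13,40)$, $U_3=(33,60)$ (extended periodically to a cover of $\R$ with only pairwise overlaps): the multiplicity is $2$ and the Lebesgue number exceeds $3$, yet the three $1$-cores are pairwise at distance $0$, so they form a triangle and cannot be split into $n+1=2$ classes that are $1$-disjoint within each class. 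So your recipe fails on a legitimate instance of Condition~(B), and the uniformity-over-$\alpha$ issue you flagged is not the real obstruction.

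The implication itself is of course true; the standard repair (and, as far as I can tell, the route taken in the reference \cite[Propositions 9.18, 9.20]{Nicas-Rosenthal} that the paper cites, since the paper gives no proof here) is to close the cycle through (C) rather than directly from (B): from an $\varepsilon$-Lipschitz map $\varphi_\alpha\colon X_\alpha\to K_\alpha$ to a uniform complex of dimension $\leq n$, one does not recolor the original cover but pulls back the \emph{canonical decomposition} of $|K_\alpha|$, namely the cover by pieces $W_\sigma$ indexed by simplices $\sigma$, grouped by $i=\dim\sigma$ into $n+1$ families that are $c(n)$-disjoint in the $\ell^1$-metric, with each $W_\sigma$ contained in $\st(v)$ for every vertex $v$ of $\sigma$. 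Choosing $\varepsilon\leq c(n)/r$ makes the preimages $r$-disjoint within each of the $n+1$ families, and since every piece embeds in a star-preimage, clause (ii) of (C) together with Coarse Permanence places the resulting family in $\fC$. Replacing your (B)~$\Rightarrow$~Def step by this (C)~$\Rightarrow$~Def argument (and noting (B)~$\Rightarrow$~(C) is routine) fixes the proof.
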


%%%%%%%%%%     New section     %%%%%%%%%%

\section{Decomposition of a Group Over a Collection of Metric Families}\label{sec:main-thm}

In \cite{Bartels1}, Bartels reformulated, in geometric group theoretic terms,
the conditions that he used with L\"{u}ck and Reich in~\cite{BLR08} to prove the Farrell-Jones Conjecture for word hyperbolic groups.
Willet and Yu observed (see \cite[Remark 1.3.5]{Bartels1}) that a group satisfying these conditions must have finite asymptotic dimension.
In this section, we approach this fact from the viewpoint of metric families in Theorem~\ref{thmAn} allowing us to give a generalization that applies to more general coarse geometric notions (Theorem~\ref{thmA}).
The conditions used by Bartels in \cite{Bartels1}, which are of a technical nature, evolved into his notion of finitely $\cF$-amenable actions in~\cite{Bartels17}.
We formulate our results using this language in Section~\ref{sec:amenable}.

Let $G$ be a group with a left invariant metric $\rho$.
Recall that
the corresponding {\it norm} on $G$ is given by ${\| g \|}_\rho = \rho(e,g)$ for $g \in G$;
moreover, $\rho$ can be recovered from ${\| \cdot  \|}_\rho$ via $\rho(g, h) ={\|g^{-1}h\|}_\rho$.
The following notion of ``$G$-equivariant up to $\varepsilon$'' generalizes the corresponding definition given by Bartels in \cite{Bartels1} (see Proposition \ref{prop:word_length}).

\begin{definition}\label{def:equivariant_up_to}
Let $G$ be a group with a left invariant metric $\rho$.
Let $\varepsilon \geq 0$.
A map $f \colon X \to Y$ between $G$-spaces, where $Y$ is equipped with a left invariant metric $d$,
is {\it $G$-equivariant up to $\varepsilon$}  if
$d\big(f(g x),g f(x)\big)\leq  \varepsilon {\| g \|}_\rho$ for every $g \in G$ and every $x\in X$.
\end{definition}

A map that is $G$-equivariant up to $\varepsilon$ gives rise to a family of $\varepsilon$-Lipschitz maps as follows.

\begin{lemma}\label{lem:bartels-lipschitz}
Let $G$ be a group with a left invariant metric $\rho$.
Let $X$ and $Y$ be left $G$-spaces, and let $d$ be a left-invariant metric on $Y$. 
Let $f \colon X \to Y$ be a map that is $G$-equivariant up to $\varepsilon$.
Then, for any $x \in X$, the map $\varphi_x \colon G \to Y$ defined by $\varphi_x(g)=g f(g^{-1} x)$ is $\varepsilon$-Lipschitz.
\end{lemma}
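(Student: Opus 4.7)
The plan is a short calculation that uses left-invariance of $d$ to reduce the Lipschitz estimate to a single application of the equivariance-up-to-$\varepsilon$ condition. Fix $x \in X$ and take arbitrary $g,h \in G$. Starting from
\[
d\bigl(\varphi_x(g),\varphi_x(h)\bigr)=d\bigl(gf(g^{-1}x),\,hf(h^{-1}x)\bigr),
\]
I would left-multiply both arguments by $g^{-1}$ and invoke left-invariance of $d$ to rewrite this as $d\bigl(f(g^{-1}x),\,g^{-1}h\,f(h^{-1}x)\bigr)$.

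Next, set $k = g^{-1}h$ and $y = h^{-1}x$. Then $ky = g^{-1}h\,h^{-1}x = g^{-1}x$, so the expression becomes $d\bigl(f(ky),\,k\,f(y)\bigr)$. By the hypothesis that $f$ is $G$-equivariant up to $\varepsilon$, this quantity is bounded by $\varepsilon\,\|k\|_\rho$. Finally, since $\|g^{-1}h\|_\rho = \rho(e,g^{-1}h)=\rho(g,h)$ (using left-invariance of $\rho$ and the relation between the norm and the metric recalled just before Definition~\ref{def:equivariant_up_to}), we obtain
\[
d\bigl(\varphi_x(g),\varphi_x(h)\bigr) \leq \varepsilon\,\rho(g,h),
\]
which is precisely the $\varepsilon$-Lipschitz condition.

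There is essentially no obstacle here; the whole argument is bookkeeping, with the only subtlety being the choice to slide $g^{-1}$ across via left-invariance of $d$ so that the two arguments of $d$ are of the form $f(\cdot)$ and $k\,f(\cdot)$ at a common base point, which is exactly the shape required to apply Definition~\ref{def:equivariant_up_to}.
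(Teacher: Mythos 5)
Your proof is correct and is essentially the same as the paper's: both left-multiply by $g^{-1}$ using invariance of $d$, rewrite $g^{-1}x$ as $(g^{-1}h)(h^{-1}x)$, and apply the equivariance-up-to-$\varepsilon$ hypothesis to the element $g^{-1}h$ at the point $h^{-1}x$, concluding with $\|g^{-1}h\|_\rho=\rho(g,h)$. No gaps.
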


\begin{proof}
We have,
\begin{eqnarray*}
	d\big(\varphi_x(g),\varphi_x(h)\big) & = & d\big(g  f(g^{-1} x),h  f(h^{-1} x)\big) \\
	  & = & d\big(f(g^{-1} x),(g^{-1}h) f(h^{-1}  x)\big) \\
	  & = & d\big(f((g^{-1}h)(h^{-1} x)),(g^{-1}h) f(h^{-1}  x)\big)  \\
	  & \leq & \varepsilon \, {\|g^{-1}h\|}_\rho = \varepsilon \,  \rho(g,h),
\end{eqnarray*}
that is,  $\varphi_x$ is $\varepsilon$-Lipschitz.
\end{proof}

Let $G$ be a countable group with a possibly infinite symmetric generating set $S$.
A  {\it weight function} for $S$ is  a function  $W \colon S  \rightarrow \R$ that is
positive,  $W(s) = W(s^{-1})$ for $s \in S$, and  proper (the pre-image of a bounded 
interval is a finite subset of $S$).
The weight function $W$ defines a proper norm on $G$ by
\[
\| g \|_W = \inf \left\{ \sum^n_{j=1} W(s_j) ~\Big|~ g = s_1 s_2 \cdots s_n,  \,\, s_j \in S \right\},
\]
see
\cite[Proposition 1.3]{DranishnikovSmith}.
Note that the requirement that  $W$ is proper implies that the infimum in the above definition is attained.
The corresponding left-invariant {\it weighted word length metric} on $G$ is given by $d_W(g,h) = \|g^{-1}h \|_W$.
In the case $S$ is finite, implying $G$ is finitely generated,
we can take $W=1$  (the constant function with value $1$) obtaining the
familiar
{\it word length metric} on $G$.

For a  countable group $G$ with a symmetric generating set $S$ and weight function $W$, 
an equivalent characterization of ``$G$-equivariant up to $\varepsilon$'',
recovering the original definition of Bartels  \cite{Bartels1} in the case $S$ is finite and $W=1$,
is given as follows.

\begin{proposition}\label{prop:word_length}
Let $G$ be a  countable group with a symmetric generating set $S$ and weight function $W$.
A map $f \colon X \to Y$ as in Definition \ref{def:equivariant_up_to} is $G$-equivariant up to $\varepsilon$ if and only if
for all $s \in S$ and $x \in X$, $d\big(f(s x),s f(x)\big)\leq  \varepsilon \, W(s)$.
\end{proposition}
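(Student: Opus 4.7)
The plan is to establish the two directions separately, with the forward implication being immediate and the reverse requiring a telescoping argument along a minimal word.

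For the forward direction ($\Rightarrow$), assume $f$ is $G$-equivariant up to $\varepsilon$. For any $s\in S$, the single-term decomposition $s = s$ shows that ${\|s\|}_W \leq W(s)$. Applying Definition~\ref{def:equivariant_up_to} with $g=s$ therefore gives $d\big(f(sx),sf(x)\big) \leq \varepsilon{\|s\|}_W \leq \varepsilon W(s)$ for every $s\in S$ and $x\in X$.

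For the reverse direction ($\Leftarrow$), the key step is a telescoping estimate along a minimal $S$-word for $g$. Fix $g \in G$ and $x \in X$. Since $W$ is proper, the infimum defining ${\|g\|}_W$ is attained, so we may write $g = s_1 s_2 \cdots s_n$ with $s_j \in S$ and $\sum_{j=1}^n W(s_j) = {\|g\|}_W$. Set $g_k = s_1 \cdots s_k$ (with $g_0 = e$, $g_n = g$) and introduce the intermediate points $p_k = g_k\, f(g_k^{-1} g\, x)$, so that $p_0 = f(gx)$ and $p_n = g f(x)$. Using the left invariance of $d$, the consecutive distance is
\begin{align*}
d(p_k, p_{k+1}) &= d\bigl(g_k f(s_{k+1}\cdots s_n x),\ g_k s_{k+1} f(s_{k+2}\cdots s_n x)\bigr) \\
 &= d\bigl(f(s_{k+1} y),\ s_{k+1} f(y)\bigr),
\end{align*}
where $y = s_{k+2}\cdots s_n x$. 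By the hypothesis, this is at most $\varepsilon W(s_{k+1})$.

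Summing over $k = 0, \ldots, n-1$ and applying the triangle inequality yields
\[
d\bigl(f(gx),\, g f(x)\bigr) \leq \sum_{k=0}^{n-1} \varepsilon W(s_{k+1}) = \varepsilon \sum_{j=1}^{n} W(s_j) = \varepsilon\,{\|g\|}_W,
\]
which is the required inequality (with $\rho = d_W$). There is no serious obstacle; the only point to watch is ensuring the minimal decomposition exists, which is guaranteed by properness of $W$, and correctly using left invariance of $d$ to strip off the common prefix $g_k$ at each step.
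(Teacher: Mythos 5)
Your proof is correct and follows essentially the same route as the paper: the forward direction via ${\|s\|}_W \leq W(s)$, and the reverse direction by telescoping the single-letter estimate along a word for $g$ using left invariance of $d$ (the paper phrases this as an induction peeling off one generator at a time and then takes the infimum over all decompositions, whereas you work with an attained minimal word, which is justified since properness of $W$ ensures the infimum is attained, as the paper itself notes).
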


\begin{proof}
Assume 
$d\big(f(g x),g f(x)\big)\leq \varepsilon \, \|g\|_W$ for all $g \in G$ and $x \in X$.
If  $s \in S$  then $\|s\|_W \leq W(s)$ and so $d\big(f(s x),s f(x)\big)\leq \varepsilon \, W(s) $.

Assume $d\big(f(s x),s f(x)\big)\leq \varepsilon \, W(s)$ for all $s \in S$ and $x \in X$.
Let $s\in S$, $h \in G$ and $g = sh$.
We have that
\begin{eqnarray*}
	d\big(f(g x),g f(x)\big) & = & d\big(f(s(hx)),s(h f(x))\big) \\
	  & \leq & d\big(f(s(hx)),s f(h x)\big) + d\big(s f(h  x),s(h f(x))\big) \\
	  & = & d\big(f(s(hx)),s f(h x)\big) + d\big(f(h  x), h f(x)\big)  \\
	  & \leq & \varepsilon \, W(s) +  d\big(f(h  x), h f(x)\big).
\end{eqnarray*}
If $g = s_1 s_2 \cdots s_n$, $s_j \in S$, then repeated application of the above inequality yields
\[
d\big(f(g x),g f(x)\big)  \leq  \varepsilon \left( \sum^n_{j=1} W(s_j)\right).
\]
Taking the infimum over all expressions of $g$ as a product of elements $S$ we obtain $d\big(f(g x),g f(x)\big)\leq \varepsilon \, \|g\|_W$,
that is, $f$ is $G$-equivariant up to $\varepsilon$.
\end{proof}

We briefly summarize some facts that we need about uniform simplicial complexes.

Given a set $S$ and a real valued function $f \colon S \to \R$,
the {\it support} of $f$ is the set $\supp(f) =\{ s \in S ~|~ f(s) \neq 0\}$.
The {\it real vector space with basis $S$} is the set $\R^S =\{ f \colon S \to \R ~|~ \text{$\supp(f)$ is finite}\}$ together
with the usual operations given by pointwise addition and scalar multiplication of functions.
The {\it $\ell^1$-norm} on $\R^S$, denoted  $\| \cdot \|_1$, is given by
$
\| f \|_1 =  \sum_{s \in S} |f(s)| 
$
and the corresponding {\it $\ell^1$-metric} on $\R^S$ is given by $d^1(f,g) = \| f  -g \|_1$.

%The {\it simplex with vertex set $S$}, denoted $\Delta(S)$, is the convex hull of the standard basis of $\R^S$, that is,
The {\it simplex with vertex set $S$}, denoted $\Delta(S)$, is the subset of $\R^S$ given by
\[
\Delta(S)=\left\{ f \in \R^S ~\Big|~ f \geq 0 \text{ and }  \sum_{s \in S} f(s) = 1\right\}.
\]
The topology on $\Delta(S)$ induced by the $\ell^1$-metric on $\R^S$ is called the {\it strong topology}.

Recall that a {\it simplicial complex} consists of a vertex set $K^{(0)}$ together with a collection $K$ of non-empty finite subsets (``simplices'') of $K^{(0)}$ such that:
\begin{itemize}
\item For every $x \in K^{(0)}$, $\{x\} \in K$,

\item if $\sigma \in K$ and $\tau \subset  \sigma$ and $\tau$ is non-empty then $\tau \in K$.
\end{itemize}
For brevity, we sometimes write $K$ for  $(K^{(0)}, K)$.

The {\it geometric realization} of a simplicial complex $(K^{(0)}, K)$, denoted by $|K|$, is the subset of $\Delta(K^{(0)})$ given by
\[
|K| ~=~ \left\{ f \in \Delta\big(K^{(0)}\big) ~\big|~ \supp(f) \in K \right\}.
\]
Note that the metric $d^1$ on $\Delta\big(K^{(0)}\big)$ restricts to  $|K|$.   A metric space of the form $(|K|, d^1)$ is called a {\it uniform simplicial complex.}

A vertex $v \in  K^{(0)}$ can be viewed as an element of $\Delta\big(K^{(0)}\big)$, also denoted by $v$ (to avoid an excess of notation), specified
by $v(v) = 1$ and $v(u) =0$ for $u \neq v$.
Any element $x\in |K|$ can be uniquely written as $x=\sum_{v\in K^{(0)}}x_v \, v$,
where $0 \leq x_v  \leq 1 $ and $x_v=0$ for all but finitely many $v\in K^{(0)}$, and $\sum_{v\in K^{(0)}}x_v=1$.
The number $x_v$ is called the {\it barycentric coordinate} of $x$ corresponding to the vertex $v$.
The {\it open star} of a vertex $v\in K^{(0)}$ is the set $\st(v)=\left\{ x\in |K| \;\big|\; x_v\neq 0 \right\}$.

Let $G$ be a group and $(K^{(0)}, K)$ a simplicial complex.  A {\it simplicial (left) $G$-action} on $K$ is a (left) $G$-action on the vertex set $K^{(0)}$ 
such that if $\{v_0, \ldots, v_n\}$ is an $n$-simplex of $K$ then for any $g \in G$, $\{g \,v_0, \ldots, g\,  v_n\}$ is also an $n$-simplex of $K$.
A simplicial $G$-action on $K$ yields a left $G$-action on $|K|$ via the formula  $(g  f)(s) = f(g^{-1}  s)$ for $g \in G$ and $s \in K^{(0)}$.  
For all $f, h \in |K|$ we have $d^1\big(g  f, g  h\big) = d^1\big(f, h\big)$, that is, $G$ acts by
isometries on $|K|$.

There is another useful topology on the geometric realization of a simplicial complex.
The {\it weak topology}, also known as the {\it Whitehead topology},
on the underlying set of $|K|$ is characterized as follows: a subset
$A \subset |K|$ is closed in the weak topology if and only if for every simplex $\sigma \in K$, the set $A \cap |\sigma|$ is closed in $|\sigma|$.
We denote the corresponding topological space by $|K|_w$.  
With this topology, $|K|_w$ is a CW complex with $n$-skeleton  $(|K|_w)^{(n)} =  \bigcup \{ |\sigma| ~|~  \sigma \in K \text{ and } \dim(\sigma) \leq n\}$.
The weak topology is finer than the strong topology, that is, the identity map ${\widetilde \id} \colon |K|_w \to |K|$ is continuous.
A simplicial complex $K$ is {\it locally finite} if each vertex of $K$ belongs to only finitely many simplicies of $K$.

\begin{proposition}[{\cite[Proposition 3.3.14]{FP}}] \label{locallyfiniteprop}
Let $K$ be a simplicial complex. The weak topology on the underlying set of $|K|$ coincides with the strong topology if and only if
$K$ is locally finite.\qed	
\end{proposition}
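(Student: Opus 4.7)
The plan is to prove the two implications separately, leveraging the fact, stated just before the proposition, that the identity $\widetilde{\id} \colon |K|_w \to |K|$ is always continuous; the issue is therefore only whether its inverse is continuous.

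For the ``if'' direction, I would assume $K$ is locally finite and show that every weakly open set $U \subset |K|_w$ is also strongly open. Fix $x \in U$ and set $V = \supp(x) \subset K^{(0)}$, a finite set. Local finiteness lets me take $L$ to be the finite subcomplex of $K$ consisting of every simplex that meets $V$. The first step I would carry out is to verify that
\[
W = \bigcap_{v \in V} \st(v)
\]
is strongly open in $|K|$ (each $\st(v) = \{ y \in |K| \mid y_v > 0\}$ is the preimage of $(0,\infty)$ under the $\ell^1$-continuous coordinate functional $y \mapsto y_v$) and is contained in $|L|$ (any $y \in W$ has $\supp(y) \supset V$, so $\supp(y)$ is a simplex of $K$ containing a vertex of $V$, hence a simplex of $L$). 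The second step uses the standard fact that on a finite simplicial complex the weak and strong topologies coincide, so $U \cap |L|$ is open in $|L|$ under either topology. Intersecting with $W$ then produces a strongly open neighborhood of $x$ sitting inside $U$.

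For the ``only if'' direction, I would argue by contrapositive. Assume some $v_0 \in K^{(0)}$ lies in infinitely many simplices of $K$. By the face axiom and a pigeonhole step (infinitely many simplices cannot be drawn from any finite vertex set), one extracts an infinite sequence of distinct vertices $v_1, v_2, \ldots$ such that each $\{v_0, v_j\}$ is a $1$-simplex of $K$. Define
\[
x_n = \bigl(1 - \tfrac{1}{n}\bigr)\, v_0 + \tfrac{1}{n}\, v_n, \qquad A = \{x_n \mid n \geq 1\}.
\]
Then $d^1(x_n, v_0) = 2/n \to 0$, so $v_0$ lies in the strong closure of $A$ while $v_0 \notin A$. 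On the other hand, $A$ is closed in $|K|_w$: for any simplex $\sigma \in K$, the intersection $A \cap |\sigma|$ contains only those $x_n$ with $v_n \in \sigma$, hence is finite and closed in $|\sigma|$. This exhibits a set that is weakly closed but not strongly closed, so the two topologies differ.

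The main obstacle, as I see it, is the bookkeeping in the ``if'' direction: producing a strongly open neighborhood $W$ of $x$ that is simultaneously trapped inside the finite subcomplex $|L|$, since without \emph{both} properties the equivalence of topologies on $|L|$ cannot be transported back to $|K|$. Once that trap is constructed, the rest of the argument reduces cleanly to the well-known fact that weak and strong topologies agree on finite simplicial complexes.
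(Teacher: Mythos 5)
Your argument is correct; note that the paper itself offers no proof of this proposition --- it is quoted from Fritsch--Piccinini \cite{FP} with a \qed --- so yours is a self-contained substitute rather than a variant of an argument in the text, and both directions check out: the ``only if'' direction (the weakly closed, strongly non-closed set $A=\{x_n\}$ accumulating at $v_0$, with the pigeonhole extraction of infinitely many edges $\{v_0,v_j\}$) is the standard counterexample and is carried out correctly, and the ``if'' direction correctly reduces to the finite case via the trap $W=\bigcap_{v\in V}\st(v)\subseteq |L|$. One small repair: the collection of all simplices of $K$ that meet $V=\supp(x)$ is not closed under passing to faces, so as stated $L$ is not a subcomplex; you should take $L$ to be the subcomplex generated by those simplices (the closed star of $V$), which is still finite by local finiteness, and the containment $W\subseteq |L|$ goes through verbatim since any $y\in W$ has $\supp(y)\supseteq V$, hence $\supp(y)$ is one of the generating simplices. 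You also implicitly use that the weak topology of the subcomplex $L$ agrees with the subspace topology inherited from $|K|_w$, and that weak and strong topologies coincide on a finite complex; both are standard and fine to cite, and with the strongly open $W\subseteq|L|$ in hand, $W\cap U=W\cap(U\cap|L|)$ is indeed strongly open in $|K|$, which completes the argument.
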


Although ${\widetilde \id} \colon |K|_w \to |K|$ is not a homeomorphism if $|K|$ is not locally finite, it is always a homotopy equivalence by \cite[\S16, Theorem 1]{Dowker}.
The following proposition is a consequence of Dowker's  theory of  metric complexes (see \cite[\S14 and (15.2)]{Dowker}
and note that $|K|$ with the $\ell^1$-metric is a metric complex in the sense of Dowker).

\begin{proposition} 
\label{DowkerProp}
Let $\varepsilon > 0$ be given. 
There exists a continuous map $h\colon |K| \to |K|_w$ such that ${\tilde h} ={\widetilde \id} \circ h$ is $\varepsilon$-homotopic,
with respect to the $\ell^1$-metric on $|K|$,
to the identity map $\id_{|K|} \colon |K| \to |K|$.\qed
\end{proposition}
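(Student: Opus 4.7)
The plan is to construct $h$ using a locally finite partition of unity on the metric space $(|K|, d^1)$, in the spirit of Dowker's proof that the identity $|K|_w \to |K|$ is a homotopy equivalence. The key observation is that on the geometric realization of any finite subcomplex of $K$ the weak and strong topologies coincide by Proposition \ref{locallyfiniteprop}; consequently, any map $h \colon |K| \to |K|$ that locally factors through a finite subcomplex and is continuous into $|K|$ is automatically continuous as a map into $|K|_w$.

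Using paracompactness of $(|K|, d^1)$, I would first choose a locally finite open refinement $\{W_\alpha\}_{\alpha \in A}$ of the open-star cover $\{\st(v) \colon v \in K^{(0)}\}$ (passing, if necessary, to a sufficiently fine subdivision $K'$ of $K$, using that $|K'|_w = |K|_w$ and $|K'| = |K|$ isometrically) so that each $W_\alpha \subset \st(v(\alpha))$ for some vertex $v(\alpha)$, and $\|x - v(\alpha)\|_1 < \varepsilon$ for every $x \in W_\alpha$. Then, with $\{\psi_\alpha\}$ a partition of unity subordinate to $\{W_\alpha\}$, I would define
\[
h(x) = \sum_{\alpha \in A} \psi_\alpha(x) \, v(\alpha).
\]
Three verifications complete the proof: (i) if $\psi_\alpha(x) > 0$ then $v(\alpha) \in \supp(x)$, so $h(x)$ is supported on the simplex $\supp(x)$ and thus lies in $|\supp(x)| \subset |K|$; (ii) by local finiteness of $\{W_\alpha\}$, each point of $|K|$ has a strong-topology neighborhood on which $h$ factors through the realization of a finite subcomplex of $K$, so $h \colon |K| \to |K|_w$ is continuous; and (iii) the straight-line homotopy $H(x, t) = (1-t)x + t\,h(x)$ is continuous, stays inside $|\supp(x)|$ because $\supp(h(x)) \subset \supp(x)$, and satisfies
\[
\|H(x, t) - x\|_1 \leq \sum_{\alpha \in A} \psi_\alpha(x) \, \|v(\alpha) - x\|_1 < \varepsilon,
\]
furnishing the required $\varepsilon$-homotopy from $\id_{|K|}$ to $\widetilde{\id} \circ h$.

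The principal obstacle is the first step. In the $\ell^1$-metric one has $\|x - v\|_1 = 2(1 - x_v)$ for $x \in \st(v)$, so the open star of a vertex can contain points at $\ell^1$-distance arbitrarily close to $2$ from $v$; hence a refinement of $\{\st(v)\}$ by sets of small $\ell^1$-diameter centered at their vertices need not cover $|K|$ at all. Overcoming this requires producing, via a sufficiently fine iterated barycentric subdivision of $K$, a rich enough vertex set so that the small-diameter pieces of the stars of vertices of $K'$ collectively cover $|K|$. This metric-combinatorial step is precisely the content of Dowker's treatment of metric simplicial complexes in \cite[\S14]{Dowker}; once it is in hand, paracompactness and the partition-of-unity construction above deliver $h$ and the $\varepsilon$-homotopy.
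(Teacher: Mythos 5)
Your outline (partition of unity subordinate to a locally finite refinement of a cover by small sets, each anchored at a vertex lying in the support of all of its points, followed by the straight-line homotopy) is the standard skeleton, and your verifications (i)--(iii) are correct as stated. But note that the paper does not actually prove this proposition: it is quoted directly from Dowker's theory of metric complexes \cite[\S 14 and (15.2)]{Dowker}. Your attempt is an expansion of that citation which, by your own admission, defers to Dowker at exactly the crucial point, so as a standalone proof it has a genuine gap, and the gap is precisely where all the work lies: for the original vertex set $K^{(0)}$ the required cover does not exist at all (as you observe, $\|x-v\|_1=2(1-x_v)$, so the $\varepsilon$-balls about vertices miss every point all of whose barycentric coordinates are $\le 1-\varepsilon/2$).

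Two concrete problems with the proposed fix. First, the claim that $|K'|=|K|$ \emph{isometrically} for a subdivision $K'$ is false: the $\ell^1$-metric is computed from the vertex set, so for example the midpoint of an edge is at distance $1$ from each endpoint in $|K|$ but, as a vertex of the barycentric subdivision, at distance $2$ from them in $|K'|$. What you actually need is weaker and is the right statement to prove: for $K$ of bounded dimension $n$, the simplices (hence open stars) of the $m$-fold barycentric subdivision have diameter at most $2\bigl(n/(n+1)\bigr)^m$ \emph{in the original metric} $d^1$, the subdivision coordinates are continuous for $d^1$, and $|K^{(m)}|_w=|K|_w$; with that, your partition-of-unity map and straight-line homotopy do give the proposition for finite-dimensional $K$ (which suffices for the applications in Section~\ref{sec:main-thm}, where $\dim E\le n$). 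Second, the proposition as stated carries no dimension hypothesis, and for infinite-dimensional $K$ iterated barycentric subdivision does not shrink mesh (the contraction factor $n/(n+1)$ tends to $1$), so the subdivision route cannot produce the needed cover in general; handling that case requires Dowker's genuinely different construction in \cite[\S 14]{Dowker}, which is exactly the source the paper invokes in place of a proof. So: correct outline, correct easy steps, but the essential metric-combinatorial input is either mis-stated (the isometry claim) or outsourced to the same reference the paper cites.
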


\begin{lemma}
\label{Whiteheadtopologylemma}
Let $G$ be a countable group with a proper left invariant metric $\rho$ and let $C= \min\{ {\|g\|}_\rho ~|~  g \neq 1 \}$.
Let $\varepsilon > 0$.
Let $X$ be a $G$-space, $|K|$ a uniform simplicial complex equipped with a simplicial $G$-action, and  $f \colon X \to |K|$ a map
that is $G$-equivariant up to $\varepsilon$ (Definition \ref{def:equivariant_up_to}).
Then $f$ is $\varepsilon$-homotopic to a map ${\tilde f}\colon X \to |K|$ that factors through ${\widetilde \id} \colon |K|_w \to |K|$ and satisfies
$d^1\big({\tilde f}(g x), g {\tilde f}(x)\big)\leq  \varepsilon (1  +  2/C)  {\| g \|}_\rho$  for all $g \in G$ and all $x \in X$.
\end{lemma}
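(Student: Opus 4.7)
The plan is to build $\tilde{f}$ as the composition of $f$ with the correction map supplied by Proposition \ref{DowkerProp}, and then to bound the failure of equivariance by the triangle inequality, absorbing the constant error into $\|g\|_\rho$ using the lower bound $C$.

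First, I would apply Proposition \ref{DowkerProp} with the given $\varepsilon$ to obtain a continuous $h\colon |K| \to |K|_w$ such that $\tilde h = {\widetilde \id}\circ h$ is $\varepsilon$-homotopic to $\id_{|K|}$ with respect to the $\ell^1$-metric. Define
\[
\tilde{f} \;=\; \tilde h \circ f \;=\; {\widetilde \id} \circ (h\circ f).
\]
Then $\tilde{f}$ factors through ${\widetilde \id}\colon |K|_w \to |K|$ via the map $h\circ f \colon X \to |K|_w$, which takes care of the factorization assertion. Composing the given $\varepsilon$-homotopy $H\colon |K|\times [0,1]\to |K|$ from $\tilde h$ to $\id_{|K|}$ with $f$ in the $X$-variable produces the required $\varepsilon$-homotopy from $\tilde{f}$ to $f$.

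Next, I would derive the equivariance estimate. Since the homotopy between $\tilde h$ and $\id_{|K|}$ is $\varepsilon$-small, we have $d^1(\tilde h(y), y) \leq \varepsilon$ for every $y\in |K|$. Since $G$ acts by isometries on $|K|$, the same bound transports under the $G$-action. Applying the triangle inequality and the hypothesis that $f$ is $G$-equivariant up to $\varepsilon$,
\begin{align*}
d^1\bigl(\tilde{f}(gx),\, g\tilde{f}(x)\bigr)
&= d^1\bigl(\tilde h(f(gx)),\, g\tilde h(f(x))\bigr) \\
&\leq d^1\bigl(\tilde h(f(gx)), f(gx)\bigr) + d^1\bigl(f(gx), gf(x)\bigr) + d^1\bigl(gf(x),\, g\tilde h(f(x))\bigr) \\
&\leq \varepsilon + \varepsilon\,\|g\|_\rho + \varepsilon \;=\; \varepsilon\bigl(2 + \|g\|_\rho\bigr).
\end{align*}

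Finally, I would convert this into the desired bound. For $g = 1$ both sides are zero. For $g \neq 1$, properness of $\rho$ gives $\|g\|_\rho \geq C$, so $2 \leq (2/C)\|g\|_\rho$ and therefore $2 + \|g\|_\rho \leq (1 + 2/C)\|g\|_\rho$, yielding
\[
d^1\bigl(\tilde{f}(gx),\, g\tilde{f}(x)\bigr) \leq \varepsilon(1 + 2/C)\,\|g\|_\rho,
\]
as required. I do not anticipate a serious obstacle here: the entire argument is a bookkeeping computation once Proposition \ref{DowkerProp} is invoked; the only subtlety is recognizing that the two endpoint error terms each contribute a constant $\varepsilon$ that must be compared against $\|g\|_\rho$ via the bound $C$.
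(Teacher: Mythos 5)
Your proposal is correct and follows essentially the same route as the paper: invoke Proposition \ref{DowkerProp}, set $\tilde f = \tilde h \circ f$, transport the $\varepsilon$-tracks along the $G$-isometries via the triangle inequality, and absorb the constant $2\varepsilon$ into $\varepsilon\|g\|_\rho$ using $\|g\|_\rho \geq C$ for $g \neq 1$. The only cosmetic difference is that the paper runs the estimate for the whole family $\tilde f_t = H_t \circ f$ rather than just at $t=1$, which is not needed for the stated conclusion.
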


\begin{proof}
By Proposition \ref{DowkerProp},
there is a map $h\colon |K| \to |K|_w$ such that ${\tilde h} ={\widetilde \id} \circ h$ is \hbox{$\varepsilon$-homotopic}
to $\id_{|K|} $. 
Let $H\colon |K| \times [0,1] \to |K|$ be a homotopy with $H_0=\id_{|K|}$, $H_1={\tilde h}$  and $d^1\big(H_t(y), y\big) \leq \varepsilon$  for all $t \in [0,1]$ and $y \in |K|$.
Let ${\tilde f}_t = H_t \circ f$ and let ${\tilde f} = {\tilde f}_1 = {\tilde h} \circ f$ yielding an $\varepsilon$-homotopy from $f$ to ${\tilde f}$.
For  all $t \in [0,1]$, $g \in G$, $g \neq 1$, and $x \in X$ we have
\begin{eqnarray*}
d^1\big({\tilde f}_t(g  x),g  {\tilde f}_t(x)\big) &=& d^1\big(H_t(f(g  x)),g  H_t(f(x))\big) \\
                                &\leq &  d^1\big(H_t(f(g  x)),  f(g  x)\big) + d^1\big(f(g  x),g  H_t(f(x))\big)\\
                                &\leq & \varepsilon + d^1\big(f(g  x),g  H_t(f(x))\big)\\
                                &\leq & \varepsilon + d^1\big(f(g  x),g  f(x) \big) + d^1\big(g  f(x), g  H_t(f(x))\big) \\
                                &= &     \varepsilon + d^1\big(f(g  x),g  f(x) \big) + d^1\big(f(x),H_t(f(x))\big)  \\
                                &\leq &  2 \varepsilon + d^1\big(f(g  x),g  f(x) \big)  ~ \leq~ 2 \varepsilon + \varepsilon {\| g \|}_\rho  ~\leq~ \varepsilon (1  +  2/C)  {\| g \|}_\rho.
\end{eqnarray*}
Hence ${\tilde f}$ satisfies the conclusion of the lemma.
\end{proof}

\begin{theorem}\label{thmA-asdim}
Let $G$ be a  countable group with a proper left invariant metric.
Let $k$ and $n$ be non-negative integers. Assume that for every $\varepsilon>0$ there is a compact $G$-space $X$, a uniform simplicial complex $E$ equipped with a simplicial $G$-action, and a map $f:X \to E$ such that
	\begin{enumerate}
		\item[(i)] $\dim(E)\leq n$;
		\item[(ii)] $f$ is $G$-equivariant up to $\varepsilon$ (Definition \ref{def:equivariant_up_to}),
		\item[(iii)] for each vertex $v \in E$, $\asdim(G_v)\leq k$, where $G_v$ is the stabilizer subgroup of $v$ and is viewed as a metric subspace of $G$.		
	\end{enumerate}
Then $\asdim(G)\leq n+k$.
\end{theorem}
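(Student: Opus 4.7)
The strategy is to verify Condition~(C) for the single-element metric family $\{G\}$ with $\fC = \fA_k$ (the collection of metric families of asymptotic dimension at most $k$), and then invoke Theorem~\ref{thm:asdim-pullback}. Fix $\varepsilon > 0$. I will apply the hypothesis with a slightly smaller $\varepsilon' = \varepsilon C/(C+2)$ to obtain a compact $G$-space $X$, a uniform simplicial complex $E$ with $\dim E \le n$ and $\asdim(G_v) \le k$ for every vertex $v$, and a map $f\colon X \to E$ that is $G$-equivariant up to $\varepsilon'$. Applying Lemma~\ref{Whiteheadtopologylemma} then replaces $f$ by a continuous $\tilde f \colon X \to E$ that is $G$-equivariant up to $\varepsilon$ and factors as $\tilde f = \widetilde{\id} \circ h$ for some continuous $h\colon X \to |E|_w$. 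Since $X$ is compact and $|E|_w$ is a CW complex, $h(X)$ lies in some finite subcomplex of $E$; let $V = \{v_1, \dots, v_N\}$ be its (finite) vertex set, so that $\supp(\tilde f(y)) \subset V$ for every $y \in X$.

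Next, I will fix any $x_0 \in X$ and define $\varphi \colon G \to E$ by $\varphi(g) = g\,\tilde f(g^{-1} x_0)$, which is $\varepsilon$-Lipschitz by Lemma~\ref{lem:bartels-lipschitz} (applied to $\tilde f$). For any vertex $v$ of $E$, unpacking $\st(v)$ gives
\[
g \in \varphi^{-1}(\st(v)) \iff g^{-1} v \in \supp\bigl(\tilde f(g^{-1} x_0)\bigr) \subset V,
\]
so $gv_i = v$ for some $i \in \{1,\ldots,N\}$. Each set $\{g \in G : gv_i = v\}$ is either empty or a left coset of the stabilizer $G_{v_i}$. Consequently $\varphi^{-1}(\st(v))$ is a subspace of some set of the form $\bigcup_{i=1}^N g_i G_{v_i}$, and the family $\{\varphi^{-1}(\st(v)) : v \in E^{(0)}\}$ is a subfamily of $\cH_N = \{\bigcup_{i=1}^N g_i G_{v_i} : g_i \in G\}$. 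Corollary~\ref{thm:asdim-cosets-family} then yields $\asdim(\cH_N) = \max_i \asdim(G_{v_i}) \le k$, and this bound transfers to the subfamily.

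Since $\fA_k$ satisfies Coarse Permanence (Example~\ref{ex:ce}(3)), these observations verify Condition~(C) for $\{G\}$ with $\fC = \fA_k$. Proposition~\ref{prop:equiv} then gives that $\{G\}$ is $n$-decomposable over $\fA_k$, and Theorem~\ref{thm:asdim-pullback} delivers the desired bound $\asdim(G) \le n + k$. The main obstacle to overcome is passing from the pointwise hypothesis $\asdim(G_v) \le k$ (which might otherwise leave one with infinitely many stabilizers and no uniform control) to a \emph{uniform} asymptotic dimension bound on the preimage family; this is precisely what the Whitehead-topology maneuver of Lemma~\ref{Whiteheadtopologylemma} combined with compactness of $X$ accomplishes by forcing the supports $\supp(\tilde f(y))$ into a common finite vertex set $V$, enabling a single application of Corollary~\ref{thm:asdim-cosets-family}.
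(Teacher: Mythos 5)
Your proposal is correct and follows essentially the same route as the paper's proof: verify Condition~(C) for $G$ over $\fA_k$ by using Lemma~\ref{Whiteheadtopologylemma} plus compactness of $X$ to confine the supports to a finite vertex set, bound the star-preimage family via Corollary~\ref{thm:asdim-cosets-family} and Coarse Permanence, and conclude with Theorem~\ref{thm:asdim-pullback}. Your only deviation is the explicit rescaling $\varepsilon' = \varepsilon C/(C+2)$ before invoking Lemma~\ref{Whiteheadtopologylemma}, a bookkeeping point the paper leaves implicit (and which is harmless either way since Condition~(C) is quantified over all $\varepsilon>0$).
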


\begin{proof}
We will show that $G$ is $n$-decomposable over $\fA_k$, the collection of metric families with asymptotic dimension at most $k$. Then, by Theorem~\ref{thm:asdim-pullback}, $\asdim(G)\leq n+k$.

We proceed by showing that $G$ satisfies Condition~(C) with respect to $n$ and $\fA_k$. 
Let $\varepsilon>0$ be given.
Then there is a compact $G$-space $X$, a uniform simplicial complex $E$ equipped with a simplicial $G$-action, and a map $f:X \to E$ that satisfy assumptions (i), (ii) and~(iii). 
By Lemma \ref{Whiteheadtopologylemma},  we can assume that $f$ factors through 
the identity map ${\widetilde \id} \colon E_w \to E$, where $E_w$ denotes the underlying set of $E$ topologized with the weak topology determined by the collection of closed simplices of $E$
(note that the weak topology and the metric topology on $E$ need not coincide, see Proposition \ref{locallyfiniteprop}).
The space $E_w$ is a CW-complex whose $n$-skeleton is the union of  all the closed simplices of $E$ of dimension at most $n$.
Since $X$ is compact and $f$ is continuous (as a map into $E_w$), $f(X)$ is a compact subset of $E_w$ and, thus, is contained in the union of finitely many simplices of $E$.

Let $E_f$ be a finite subcomplex of $E$ with $f(X) \subset E_f$, and let $\{ v_1,\dots, v_m \}$ be the vertex set of $E_f$. Let $G_{v_i}=\big\{ g\in G \;\big|\; g v_i=v_i \big\}$ be the stabilizer of $v_i$. For each vertex $w$ of $E$, let $Q_i(w)=\big\{ g\in G \;\big|\; g v_i=w \big\}$. If $Q_i(w)$ is non-empty, choose $g_{w,i} \in Q_i(w)$. Then $Q_i(w)=g_{w,i}G_{v_i}$. 

	Fix $x\in X$ and define $\varphi_x: G \to E$ by $\varphi_x(g)=g f(g^{-1} x)$; it is $\varepsilon$-Lipschitz by Lemma~\ref{lem:bartels-lipschitz}. If $v$ is a vertex of $E$, then
\renewcommand{\arraystretch}{2}
\[\begin{array}{rcl}
	\varphi_x^{-1}\big(\{ v \} \big) & = & \big\{g \in G \; \big|\; \varphi_x(g)=v \big\} \\
	  & = & \big\{g \in G \; \big|\; g f(g^{-1} x)=v \big\}  \\
	  & = & \big\{g \in G \; \big|\; f(g^{-1} x)=g^{-1} v \big\}  \\
	  & \subset & \big\{g \in G \; \big|\; g^{-1} v \in \{ v_1,\dots, v_m \} \big\}=\displaystyle\bigcup_{i=1}^m \big\{g \in G \; \big|\; g^{-1} v=v_i \big\},
\end{array}\]
since $f(X) \subset E_f$. Thus,
\[ \varphi_x^{-1}\big(\{ v \} \big) \subset \displaystyle\bigcup_{i=1}^m \big\{g \in G \; \big|\; g^{-1} v=v_i \big\} = \bigcup_{i=1}^m Q_i(v) = \bigcup_{i=1}^m g_{v,i}G_{v_i}. \]
More generally, we have
\[ \varphi_x^{-1}\big(\st(v)\big) \subset \bigcup_{i=1}^m g_{v,i}G_{v_i}, \] 
because
\[ g\in \varphi_x^{-1}\big(\st(v)\big) \;\; \Leftrightarrow \;\; \big(\varphi_x(g)\big)_v \neq 0 \;\; \Leftrightarrow \;\; f(g^{-1} x)_{g^{-1} v} \neq 0 \;\; \Leftrightarrow \;\; g^{-1} v \in \{ v_1,\dots, v_m \}. \]
(Recall $\big(\varphi_x(g)\big)_v$ and $f(g^{-1} x)_{g^{-1} v}$ denote barycentric coordinates.)
It follows that \newline
\[
\big\{ \varphi^{-1}_x\big(\st(v)\big) \; \big| \; v \in E^{(0)} \big\} \subset \Big\{ \bigcup_{i=1}^m g_{v,i}G_{v_i} \;\Big|\; v \in E^{(0)} \Big\}.
\]
By Corollary~\ref{thm:asdim-cosets-family}, $\asdim\big(\big\{ \bigcup_{i=1}^m g_{v,i}G_{v_i} \;\big|\; v \in E^{(0)} \big\} \big)\leq k$.
Thus,
\[
\asdim\big(\big\{ \varphi^{-1}_x\big(\st(v)\big) \; \big| \; v \in E^{(0)} \big\}\big) \leq k.
\]
Therefore, $G$ is $n$-decomposable over $\fA_k$.	
\end{proof}

\begin{remark}
Let $\rho$ and $\rho'$ be proper left invariant metrics on $G$. 
It was pointed out to us by the referee that 
if the conditions of the above theorem are satisfied for one of the metrics, then they are satisfied for the other by arguing as follows.
Assume that $\rho$ satisfies the conditions of the theorem.  
Let $\varepsilon > 0$.
Since the open ${\rho'}$-ball of radius $2/ \varepsilon$ centered at the identity is finite,
there is a $\delta >0$ such that $\delta \,  \| g \|_{\rho} \leq  \| g \|_{\rho'}$ for all $g \in G$ with $\|g\|_{\rho'} < 2/ \varepsilon$.
Let $f \colon X \rightarrow E$ be $G$-equivariant up to $\varepsilon \delta$ with respect to the metric $\rho$.
Then for $\| g \|_{\rho'} < 2/ \varepsilon$ and $x \in X$, we have
\[
d^1(f(gx), g f(x)) ~\leq~ \varepsilon \delta \, \| g \|_{\rho} ~\leq~  \varepsilon \, \| g \|_{\rho'}.
\]
For $\| g \|_{\rho'} \geq 2/ \varepsilon$ and $x \in X$, we have
\[
\varepsilon \, \| g \|_{\rho'}  ~\geq~  2 ~\geq~ \diam(E) ~\geq~  d^1(f(gx), g f(x)).
\]
Hence, $f$ is $G$-equivariant up to $\varepsilon$ with respect to the metric $\rho'$.
\end{remark}

Guentner, Willet and Yu showed that if the stabilizer subgroups, $G_v$, are finite for every vertex $v$ in $E$, then the action of $G$ on $X$ has finite {\it dynamic asymptotic dimension}~\cite[Theorem 4.11]{GWY}.

We wish to generalize Theorem~\ref{thmA-asdim} to allow for isotropy groups that are contained in a collection of metric families with sufficiently nice properties.

\begin{theorem}\label{thmAn}
	Let $\fC$ be a collection of metric families that satisfies Coarse Permanence~(\ref{def:closed-under-embeddings}), Finite Amalgamation Permanence~(\ref{def:amalgam}), and Finite Union Permanence~(\ref{def:finunion}). Let $G$ be a  countable group with a proper left invariant metric, and let $n$ be a non-negative integer. Assume that for every $\varepsilon>0$ there is a compact $G$-space $X$, a uniform simplicial complex $E$ equipped with a simplicial $G$-action, and a map $f \colon X \to E$ such that
	\begin{enumerate}
		\item[(i)] $\dim(E)\leq n$;
		\item[(ii)] $f$ is $G$-equivariant up to $\varepsilon$ (Definition \ref{def:equivariant_up_to});
		\item[(iii)] for each vertex $v \in E$, the stabilizer subgroup $G_v=\big\{ g\in G \;\big|\; g v=v \big\}$, considered as a metric subspace of $G$, is in $\fC$.		
	\end{enumerate}
Then $G$ is $n$-decomposable over $\fC$. In particular, if $\fC$ is also stable under weak decomposition, then $G$ is in $\fC$.
\end{theorem}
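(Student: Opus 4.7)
The plan is to show that the one-element metric family $\{G\}$ satisfies Condition~(C) with parameter $n$ relative to $\fC$, from which Proposition~\ref{prop:equiv} immediately gives that $G$ is $n$-decomposable over $\fC$. The argument will track the proof of Theorem~\ref{thmA-asdim} almost line by line, but the final appeal to Theorem~\ref{thm:asdim-pullback} will be replaced by the three permanence properties enjoyed by $\fC$.

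Given $\varepsilon > 0$, I will apply the hypothesis to a smaller parameter $\varepsilon'$ that absorbs the constant $1 + 2/C$ produced by Lemma~\ref{Whiteheadtopologylemma} (with $C = \min\{\|g\|_\rho : g \neq 1\}$), obtaining a compact $G$-space $X$, a uniform simplicial complex $E$ with $\dim E \leq n$, and a $G$-equivariant-up-to-$\varepsilon'$ map $f \colon X \to E$. Lemma~\ref{Whiteheadtopologylemma} then lets me replace $f$ by a map factoring through the weak topology on $E$, so that the compactness of $X$ forces $f(X)$ into some finite subcomplex $E_f \subset E$ with vertices $v_1, \ldots, v_m$. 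For a fixed $x \in X$, the map $\varphi_x \colon G \to E$ given by $\varphi_x(g) = g f(g^{-1} x)$ is $\varepsilon$-Lipschitz by Lemma~\ref{lem:bartels-lipschitz}, and the set-theoretic argument from the proof of Theorem~\ref{thmA-asdim} yields
$$\varphi_x^{-1}\bigl(\st(v)\bigr) \;\subset\; \bigcup_{i=1}^{m} g_{v,i} G_{v_i}$$
for every vertex $v \in E^{(0)}$, where $g_{v,i}$ is any element with $g_{v,i} v_i = v$ (the summand being omitted when no such element exists).

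The key step is to show that $\cY = \bigl\{\bigcup_{i=1}^{m} g_{v,i} G_{v_i} \,\big|\, v \in E^{(0)}\bigr\}$ belongs to $\fC$. For each fixed $i$, the family $\cX_i = \{g_{v,i} G_{v_i} \mid v \in E^{(0)}\}$ coarsely embeds into the one-element family $\{G_{v_i}\}$ via the left-translation isometries $g_{v,i} h \mapsto h$; since $\{G_{v_i}\} \in \fC$ by hypothesis, Coarse Permanence gives $\cX_i \in \fC$. Finite Union Permanence, applied to the finite list $\cX_1, \ldots, \cX_m$ with common indexing parameter $v$, then places $\cY \in \fC$. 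The subfamily $\{\varphi_x^{-1}(\st(v)) \mid v \in E^{(0)}\}$ embeds isometrically into $\cY$, so a final invocation of Coarse Permanence puts it in $\fC$, verifying Condition~(C).

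The main obstacle is essentially bookkeeping: I must ensure that passing through the finite subcomplex $E_f$ produces a uniform bound $m$ on the number of cosets appearing, independent of $v$, so that Finite Union Permanence applies to a genuinely finite list of families. (Finite Amalgamation Permanence is not logically needed in this direct argument, but it is what makes the collections $\wD$, $\fE$, $\fH$ satisfy Finite Union Permanence via Lemma~\ref{lem:weak-decomp-finite-union}.) Once $n$-decomposability over $\fC$ has been established, the ``In particular'' clause follows at once: stability under weak decomposition means that any metric family $n$-decomposable over $\fC$ is itself in $\fC$, and by Definition~\ref{def:groups-in-C} this says $G \in \fC$.
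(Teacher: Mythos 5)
Your proof is correct, and its skeleton is the same as the paper's: verify Condition~(C) for $\{G\}$ via Proposition~\ref{prop:equiv}, use Lemma~\ref{Whiteheadtopologylemma} to push $f(X)$ into a finite subcomplex $E_f$, take the $\varepsilon$-Lipschitz map $\varphi_x(g)=gf(g^{-1}x)$ from Lemma~\ref{lem:bartels-lipschitz}, and bound $\varphi_x^{-1}\big(\st(v)\big)$ by $\bigcup_{i=1}^m g_{v,i}G_{v_i}$ exactly as in the proof of Theorem~\ref{thmA-asdim}. The one genuine difference is the final permanence step. The paper first applies Finite Amalgamation Permanence~(\ref{def:amalgam}) to put $\{G_{v_1},\dots,G_{v_m}\}$ in $\fC$, then uses a single coarse equivalence with $\{g_{v,i}G_{v_i} \mid i, v\}$ followed by Finite Union Permanence~(\ref{def:finunion}); you instead treat each $i$ separately, coarsely embedding $\cX_i=\{g_{v,i}G_{v_i}\mid v\}$ into the one-element family $\{G_{v_i}\}$ and invoking only Coarse Permanence~(\ref{def:closed-under-embeddings}) before Finite Union Permanence. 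This is a mild sharpening: your route shows that Finite Amalgamation Permanence is not logically needed for the conclusion that $G$ is $n$-decomposable over $\fC$, and, as you note, it re-enters only through Lemma~\ref{lem:weak-decomp-finite-union} when one wants to deduce Finite Union Permanence for weak-decomposition-stable collections such as $\wD$, $\fE$, $\fH$. One small bookkeeping point: rather than ``omitting the summand'' when $Q_i(v)=\emptyset$, set $g_{v,i}=e$ (the containment $\varphi_x^{-1}(\st(v))\subset\bigcup_{i=1}^m g_{v,i}G_{v_i}$ is unaffected since $Q_i(v)$ contributes nothing); this keeps the union running over a fixed list of $m$ families so that Finite Union Permanence applies verbatim, which is the same implicit convention the paper uses.
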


\begin{proof}
	By Proposition~\ref{prop:equiv}, the result will follow from showing that $G$ satisfies Condition~(C) with respect to $n$ and $\fC$. 

	Let $\varepsilon>0$ be given. Then there is a compact $G$-space $X$, a uniform simplicial complex $E$ equipped with a simplicial $G$-action, and a map $f:X \to E$ that satisfy assumptions (i), (ii) and~(iii). As in the proof of Theorem~\ref{thmA-asdim}, we can assume, by Lemma~\ref{Whiteheadtopologylemma}, that there is a finite subcomplex $E_f$ of $E$ with $f(X) \subset E_f$. Let $\{ v_1,\dots, v_m \}$ be the vertex set of $E_f$, and let $G_{v_i}=\big\{ g\in G \;\big|\; g v_i=v_i \big\}$ be the stabilizer of $v_i$. Then, as established in the proof of Theorem~\ref{thmA-asdim}, $\big\{ \varphi^{-1}_x\big(\st(v)\big) \; \big| \; v \in E^{(0)} \big\} \subset \big\{ \bigcup_{i=1}^m g_{v,i}G_{v_i} \;\big|\; v \in E^{(0)} \big\}$. 
	
	Since the metric on $G$ is left-invariant, each $g_{v,i}G_{v_i}$ is isometric to $G_{v_i}$. Therefore, $\big\{g_{v,i}G_{v_i} \;\big|\; 1 \leq i \leq m, \, v \in E^{(0)} \big\}$ is coarsely equivalent to the metric family $\big\{G_{v_i} \;\big|\; 1 \leq i \leq m \big\}$, which is in $\fC$ by Finite Amalgamation Permanence. Thus, $\big\{g_{v,i}G_{v_i} \;\big|\; 1 \leq i \leq m, \, v \in E^{(0)} \big\}$ is in $\fC$ by Coarse Permanence, and so $\big\{ \bigcup_{i=1}^m g_{v,i}G_{v_i} \;\big|\; v \in E^{(0)} \big\}$ is in $\fC$ by Finite Union Permanence. Since inclusions are a special case of Coarse Permanence, $\big\{ \varphi^{-1}_x\big(\st(v)\big) \; \big| \; v \in E^{(0)} \big\}$ is in $\fC$.
Thus, $G$ satisfies Condition~(C) with respect to $n$ and $\fC$, as desired.
\end{proof}

Theorem~\ref{thmA-asdim} is a special case of Theorem~\ref{thmAn} since the collection $\fA_k$ of metric families with asymptotic dimension less than or equal to $k$ satisfies Coarse Permanence, Finite Amalgamation Permanence, and Finite Union Permanence.

Recall that if a collection of metric families, $\fC$, satisfies Coarse Permanence and Finite Amalgamation Permanence and is stable under weak decomposition, then it will automatically satisfy Finite Union Permanence (Lemma~\ref{lem:weak-decomp-finite-union}). Thus, in Theorem~\ref{thmAn}, if we replace the assumption that $\fC$ satisfies Finite Union Permanence with the assumption that it is stable under weak decomposition, then we can still conclude that $G$ is in $\fC$. We state this result formally below.

\begin{theorem}\label{thmA}
	Let $\fC$ be a collection of metric families that satisfies Coarse Permanence, Finite Amalgamation Permanence, and is stable under weak decomposition. Let $G$ be a  countable group with a proper left invariant metric, and let $n$ be a non-negative integer. Assume that for every $\varepsilon>0$ there is a compact $G$-space $X$, a uniform simplicial complex $E$ equipped with a simplicial $G$-action, and a map $f \colon X \to E$ such that
	\begin{enumerate}
		\item[(i)] $\dim(E)\leq n$;
		\item[(ii)] $f$ is $G$-equivariant up to $\varepsilon$ (Definition \ref{def:equivariant_up_to});
		\item[(iii)] for each vertex $v \in E$, the stabilizer subgroup $G_v=\big\{ g\in G \;\big|\; g v=v \big\}$,
		considered as a metric subspace of $G$, is in $\fC$.		
	\end{enumerate}
Then $G$ is in $\fC$.\qed	
\end{theorem}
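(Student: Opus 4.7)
The plan is to obtain Theorem~\ref{thmA} as an immediate consequence of Theorem~\ref{thmAn}, by checking that the hypothesis ``stable under weak decomposition'' gives us the Finite Union Permanence that Theorem~\ref{thmAn} required. This is essentially packaged by Lemma~\ref{lem:weak-decomp-finite-union}, so the argument should be very short.

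First, I would observe that $\fC$ satisfies Coarse Permanence and Finite Amalgamation Permanence by hypothesis, and is stable under weak decomposition. Applying Lemma~\ref{lem:weak-decomp-finite-union}, we conclude that $\fC$ also satisfies Finite Union Permanence. At this point, all three permanence properties required in the hypothesis of Theorem~\ref{thmAn} are available for $\fC$.

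Next, I would invoke Theorem~\ref{thmAn} with the same $G$, $n$, $X$, $E$, $f$ as given in the hypotheses of Theorem~\ref{thmA} (conditions (i), (ii), (iii) translate verbatim). The conclusion of Theorem~\ref{thmAn} is that $G$ is $n$-decomposable over $\fC$. Since by assumption $\fC$ is stable under weak decomposition, and weak decomposition subsumes $n$-decomposability for any fixed $n$, it follows that $G \in \fC$, which is the conclusion we want.

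There is essentially no obstacle here; the real work has already been carried out in the proofs of Theorem~\ref{thmAn} (which uses the equivalence of Condition~(C) with $n$-decomposition via Proposition~\ref{prop:equiv}, together with the coset-stabilizer argument that realizes the preimages $\varphi_x^{-1}(\st(v))$ as finite unions of isometric copies of the stabilizers $G_{v_i}$) and in Lemma~\ref{lem:weak-decomp-finite-union}. The only point worth flagging is the logical chain: weak decomposition stability plus Finite Amalgamation Permanence yields Finite Union Permanence, which then feeds Theorem~\ref{thmAn}, whose output is absorbed by the stability under weak decomposition. Thus the proof is a one-line deduction citing these two prior results.
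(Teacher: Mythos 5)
Your proposal is correct and is exactly the paper's own argument: the paper derives Theorem~\ref{thmA} by noting (via Lemma~\ref{lem:weak-decomp-finite-union}) that stability under weak decomposition together with Finite Amalgamation Permanence yields Finite Union Permanence, then applying Theorem~\ref{thmAn} and absorbing its conclusion, that $G$ is $n$-decomposable over $\fC$, back into $\fC$ using stability under weak decomposition. No gaps; nothing further is needed.
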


\begin{remark}\label{rem:collections}
Among the collections of metric families considered in this paper (that is, $\fB$, $\fA$, $\fA_k$, $\fD$, $\wD$, $\fE$, and $\fH$), all of them satisfy Coarse Permanence, Finite Amalgamation Permanence, and Finite Union Permanence (see, for example, \cite{Guentner}). The collections $\wD$, $\fE$, and $\fH$ are known to be stable under weak decomposition (\cite[Proof of Theorem 4.3]{Guentner_Tessera_Yu2}, \cite[Theorem 9.23]{Nicas-Rosenthal}). It is unknown if $\fD$ is stable under weak decomposition.
\end{remark}

%%%%%%%%%%     New section     %%%%%%%%%%

\section{Finitely $\cF$-amenable actions}\label{sec:amenable}

In his work on relatively hyperbolic groups and the Farrell-Jones Conjecture, Bartels introduced the notion of a finitely $\cF$-amenable action~\cite{Bartels17}, where $\cF$ is a family of subgroups of a given group that is closed under conjugation and taking subgroups. Such actions provide examples to which the results of Section~\ref{sec:main-thm} can be applied, for example to relatively hyperbolic groups (see Theorem~\ref{thm:rel_hyp} below).

\begin{definition}\label{def:F-cover}
Let $X$ be a $G$-space and $\cF$ be a family of subgroups of $G$.
\begin{enumerate}
	\item An open set $U$ in $X$ is an {\it $\cF$-subset} if there is an $F\in \cF$ such that $gU=U$ for every $g\in F$ and $gU\cap U=\emptyset$ for every $g \notin F$.
	\item An open cover $\cU$ of $X$ is {\it $G$-invariant} if $gU\in \cU$ for all $g\in G$ and all $U \in \cU$.
	\item A $G$-invariant cover $\cU$ of $X$ is an {\it $\cF$-cover} if all of the members of $\cU$ are $\cF$-subsets.
\end{enumerate}
\end{definition}

\begin{definition}\cite[Definition 0.1]{Bartels17}\label{def:NF-amenable}
Let $\cF$ be a family of subgroups of $G$ and let $N$ be a non-negative integer.
A $G$-action on a space $X$ is {\it $N$-$\cF$-amenable} if for any finite subset $S$ of $G$ there exists an open $\cF$-cover $\cU$ of $G\times X$ (equipped with the diagonal $G$-action) such that:
\begin{enumerate}
	\item the dimension of $\cU$ is at most $N$; and 
	\item for all $x\in X$ there is a $U \in \cU$ with $S \times \{x\} \subseteq U$.
\end{enumerate}
A $G$-action is called {\it finitely $\cF$-amenable} if it is $N$-$\cF$-amenable for some $N$.
\end{definition}

\begin{proposition}\label{prop:NF-amenable}
Let $G$ be a countable group and $\cF$ be a family of subgroups of $G$. Assume that there exists an $N$-$\cF$-amenable action of $G$ on $X$, where $X$ is compact and metrizable. Then, given any proper left invariant metric on $G$, for every $\varepsilon>0$ there exists a uniform simplicial complex $E$ equipped with a simplicial $G$-action and a map $f:X \to E$ such that:
\begin{enumerate}
	\item[(i)] $\dim(E)\leq N$;
	\item[(ii)] $f$ is $G$-equivariant up to $\varepsilon$ (Definition \ref{def:equivariant_up_to}),
	\item[(iii)] the stabilizer subgroup of each vertex in $E$ is an element of $\cF$.		
\end{enumerate}
\end{proposition}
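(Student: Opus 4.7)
My plan is as follows. Given $\varepsilon > 0$, choose $R > 0$ large (of order $N/\varepsilon$, specified below) and let $S = \{g \in G \mid \|g\|_\rho \leq R\}$, which is finite since $\rho$ is proper. By $N$-$\cF$-amenability there is an open $\cF$-cover $\cU$ of $G \times X$ with $\dim \cU \leq N$ such that for every $x \in X$ some $U_x \in \cU$ contains $S \times \{x\}$. Let $E$ be the geometric realization of the nerve of $\cU$, equipped with the $\ell^1$-metric and the simplicial $G$-action inherited from the $G$-action on $\cU$. Then $E$ is a uniform simplicial complex of dimension at most $N$, and the stabilizer of each vertex $v_U \in E$ equals the subgroup of $\cF$ attached to $U$ by the $\cF$-subset property, establishing (i) and (iii).

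For the map $f$, I would construct a $G$-equivariant partition of unity $\{\psi_U\}_{U \in \cU}$ subordinate to $\cU$ whose individual functions are $1$-Lipschitz in the $G$-coordinate. The natural starting point is
\[
\hat\psi_U(h,x) \;=\; \min\!\Bigl(R,\ \inf\{\rho(h,h') \mid (h',x) \notin U\}\Bigr)
\]
(with empty infimum $+\infty$): left-invariance of $\rho$ and $G$-invariance of $\cU$ give $\hat\psi_{gU}(gh,gx) = \hat\psi_U(h,x)$, and $\hat\psi_U$ is $1$-Lipschitz in $h$. Normalizing $\psi_U = \hat\psi_U / \sum_V \hat\psi_V$ and setting $\Phi(h,x) = \sum_U \psi_U(h,x)\,v_U$ yields a $G$-equivariant map $\Phi \colon G \times X \to E$, and I would define $f(x) = \Phi(e, x)$. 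Using $g\,f(x) = \Phi(g,gx)$ gives
\[
d^1\bigl(f(gx), gf(x)\bigr) \;=\; \sum_U \bigl|\psi_U(e,gx) - \psi_U(g,gx)\bigr|.
\]
At most $2(N+1)$ terms are nonzero (those $U$ containing $(e,gx)$ or $(g,gx)$); the cover condition forces $\hat\psi_{U_{gx}}(e,gx) = R$, so the denominator of $\psi_U$ at $(e,gx)$ is at least $R$, and at $(g,gx)$ it is bounded below linearly in $R$ provided $\|g\|_\rho \leq R/2$. A standard quotient-rule estimate then yields $d^1(f(gx), gf(x)) \leq C(N)\|g\|_\rho/R$ when $\|g\|_\rho \leq R/2$; for larger $\|g\|_\rho$ the trivial bound $d^1 \leq 2$ (the $\ell^1$-diameter of a simplex) together with $R$ proportional to $1/\varepsilon$ suffices. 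Choosing $R$ of order $\max(C(N), 1)/\varepsilon$ completes (ii).

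The main obstacle I anticipate is continuity of $f$ as a map into $(E, d^1)$: the function $\hat\psi_U(h, x)$ is \emph{a priori} only lower semicontinuous in $x$, since whether a fixed $h'$ satisfies $(h', x) \notin U$ can change as $x$ crosses the boundary of the open set $U^{h'} = \{x : (h', x) \in U\}$. I would address this by either smoothing $\hat\psi_U$ using a metric on $G \times X$ compatible with the free proper diagonal $G$-action and restricting to $\rho$ on each fiber, or by replacing $\cU$ by a locally finite $G$-equivariant open refinement (available by paracompactness of $G \times X$) for which the standard normalized distance-to-complement is continuous. Either way, the Lipschitz-in-$G$ estimate must be re-verified, but the same quotient-rule argument applies.
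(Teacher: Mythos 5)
Your construction follows the same skeleton as the paper's proof (take $S$ a ball of radius $R\sim 1/\varepsilon$, let $E$ be the nerve of the resulting $\cF$-cover, build an equivariant partition of unity from distances to complements, run the quotient-rule estimate, and set $f(x)=\Phi(e,x)$), and your verification of (i), (iii) and of the metric inequality in (ii) is sound. However, the continuity problem you flag at the end is not a side issue you can defer: it is the technical heart of the proposition, and neither of your proposed fixes resolves it. The map $f$ must be continuous into $(E,d^1)$, since the only use of this proposition is to feed Theorems~\ref{thmA-asdim}--\ref{thmA}, whose proofs need $f(X)$ to land in finitely many simplices (via Lemma~\ref{Whiteheadtopologylemma} and compactness of $X$); your slice-wise truncated functions $\hat\psi_U$ are genuinely only lower semicontinuous in $x$, and the normalized quotient of lower semicontinuous functions need not be continuous, so your $f$ may fail to be a map in the required sense.

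Concerning the fixes: passing to a locally finite $G$-equivariant refinement destroys exactly the largeness property that drives your estimate --- after refining there is no reason that some single element still contains $S\times\{x\}$ for each $x$, so the lower bound $\geq R$ on the denominator (and hence the $\varepsilon$-Lipschitz bound) is lost; moreover the ``standard'' distance-to-complement partition of unity is $G$-equivariant only with respect to a $G$-invariant metric on $G\times X$, which you have not produced (note that a compact metrizable $G$-space need not admit an invariant metric, so a product metric does not work). Your other suggestion --- smoothing via a metric on $G\times X$ compatible with the diagonal action and restricting to $\rho$ on fibers --- is precisely the missing nontrivial ingredient: the paper invokes \cite[Proposition 4.3 and Lemma 5.1]{BLR08} to obtain a $G$-invariant metric $d$ on $G\times X$ with $d\big((g,x),(h,x)\big)=d_G(g,h)$ \emph{and}, crucially, with the Lebesgue-number-type property that every $d$-ball of radius $R$ lies in some member of $\cU$. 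With that metric one simply sets $\psi_U(y)=d(y,U^c)/\sum_V d(y,V^c)$, and continuity, $G$-equivariance, support in $U$, and the denominator bound all hold at once. Note that an invariant metric restricting to $d_G$ on fibers is not by itself enough: $d(y,U^c)$ can be small in the $X$-direction even when the slice-distance is $\geq R$, so the adaptation of the metric to the cover $\cU$ (which uses compactness of $X$) is exactly what you would have to construct or cite to close the gap.
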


\begin{proof}
The proposition follows from \cite[Remarks 0.2--0.4]{Bartels17}, but we include a proof here for the reader's convenience.

Let $d_G$ be a proper left invariant metric on $G$. (Recall that every countable group admits one.) Thus, every ball of finite radius in $G$ with respect to $d_G$ contains finitely many elements. Let $\varepsilon>0$ be given and set $R=\frac{(2N+2)(2N+3)}{\varepsilon}$. Since the action of $G$ on $X$ is $N$-$\cF$-amenable, there exists an open $\cF$-cover $\cU$ of $G \times X$ of dimension at most $N$ such that for each $x\in X$ there is a $U\in \cU$ with $B_{d_G}(e;R) \times \{x\} \subseteq U$, where $B_{d_G}(e;R)$ is the open ball of radius $R$ in $G$ around the identity of $G$.  Let $E={\rm Nerve}(\cU)$ equipped with the uniform metric. Then $\dim(E)=\dim(\cU) \leq N$. It follows from the definition of an $\cF$-cover that the stabilizer subgroup of a vertex in $E$ is an element of the family $\cF$.
	
Since $X$ is compact and metrizable, by \cite[Proposition 4.3 and Lemma 5.1]{BLR08} there is a metric $d$ on $G \times X$ that is $G$-invariant with respect to the diagonal action, satisfies $d\big((g,x),(h,x)\big)=d_G(g,h)$ for every $g,h \in G$ and $x \in X$, and has the property that for every $(g,x)\in G \times X$ there is a $U_{(g,x)}\in \cU$ such that $B_{d}\big((g,x);R\big) \subseteq U_{(g,x)}$, where $B_{d}\big((g,x);R\big)$ is the open ball of radius $R$ around $(g,x)$ with respect to $d$.

We now define a map $f:X \to E$ that satisfies item (ii). Recall the following standard construction. For each $U\in \cU$, define $\psi_U: G \times X \to [0,1]$ by
	\[ \psi_U(y)=\dfrac{d(y,U^c)}{\sum_{V \in \;\cU}d(y,V^c)} \]
where $U^c$ is the complement of $U$ in $G \times X$. Define $\psi:G \times X \to E$ to be the map
	\[ \psi(y)=\sum_{U \in \; \cU}\psi_U(y) \cdot [U] \]
where $[U]$ denotes the vertex of $E$ corresponding to $U$.
Note that $\psi$ is $G$-equivariant: Since $d(gy,U^c)=d(y,g^{-1}U^c)$, it follows that $\psi_U(gy)=\psi_{g^{-1}U}(y)$ and so
\[ g\psi(y)=\sum_{U \in \; \cU}\psi_U(y) \cdot [gU]=\sum_{U \in \; \cU}\psi_{g^{-1}U}(y) \cdot [U]=\sum_{U \in \; \cU}\psi_U(gy) \cdot [U]=\psi(gy). \] 
Let  $f:X \to E$ be the map defined by $f(x)=\psi(e,x)$.

For every $y,y' \in G \times X$ and $U \in \cU$, the triangle inequality implies
	\[ \big| d(y,U^c) - d(y',U^c)\big| \leq d(y,y'). \]
Therefore, {\small
\renewcommand{\arraystretch}{3}
\[\begin{array}{rcl}
	\big|\psi_U(y)-\psi_U(y')\big| & = & \left| \dfrac{d(y,U^c)}{\sum_{V \in \;\cU}d(y,V^c)} - \dfrac{d(y',U^c)}{\sum_{V \in \;\cU}d(y',V^c)}  \right| \\
	& \leq & \dfrac{|d(y,U^c) - d(y',U^c)|}{\sum_{V \in \;\cU}d(y,V^c)} + \left| \dfrac{d(y',U^c)}{\sum_{V \in \;\cU}d(y,V^c)} - \dfrac{d(y',U^c)}{\sum_{V \in \;\cU}d(y',V^c)} \right| 
\end{array} \] }

\noindent
which is less than or equal to

\noindent
{\small
\renewcommand{\arraystretch}{2}
	\[ \dfrac{d(y,y')}{\sum_{V \in \;\cU}d(y,V^c)} +  \dfrac{d(y',U^c)}{\Big(\sum_{V \in \;\cU}d(y,V^c)\Big) \Big(\sum_{V \in \;\cU}d(y',V^c)\Big)}\cdot \sum_{V \in \;\cU} \big|d(y,V^c)-d(y',V^c)\big|\]}

\noindent
which is less than or equal to

\noindent
\renewcommand{\arraystretch}{2}
	\[ \dfrac{1}{\sum_{V \in \;\cU}d(y,V^c)}\Big(d(y,y') +  \sum_{V \in \;\cU} \big|d(y,V^c)-d(y',V^c)\big| \Big).\]

Let $g,h \in G$ and $x \in X$ be given. Since $B_{d}\big((g,x);R\big) \subseteq U_{(g,x)}$, it follows that  
\[ \sum_{V \in \;\cU}d\big((g,x),V^c\big) \geq d\big((g,x), (U_{(g,x)})^c \big)\geq R. \]
Thus,
\renewcommand{\arraystretch}{2}
\[\begin{array}{rcl}
	\big|\psi_U(g,x)-\psi_U(h,x)\big| & \leq & \frac{1}{R}\, \Big(d\big((g,x),(h,x)\big) + 2(N+1)\, d\big((g,x),(h,x)\big)\Big)  \\
	& = & \frac{1}{R}\, (2N+3)\,  d_G(g,h)
\end{array}\]

\noindent
and so, 
$$ d^1\big(\psi(g,x),\psi(h,x)\big) \, = \sum_{U \in \;\cU} \big|\psi_U(g,x)-\psi_U(h,x)\big| \,\leq\, \frac{2(N+1)(2N+3)}{R}\, d_G(g,h).
$$

\noindent
Hence, if $x\in X$ and $g\in G$
\renewcommand{\arraystretch}{2}
\[\begin{array}{rcl}
	d^1\big(f(gx),gf(x)\big) & = & d^1\big(\psi(e,gx),g\psi(e,x)\big)\\
	& = & d^1\big(\psi(e,gx),\psi(g,gx)\big)  \\
	& \leq & \frac{1}{R}\, (2N+2)(2N+3)\,  d_G(e,g) = \varepsilon {\| g \|}_{d_G}
\end{array}\]
This completes the proof.
\end{proof}

Proposition~\ref{prop:NF-amenable} yields the following corollaries to Theorems~\ref{thmA-asdim}, \ref{thmAn}, and~\ref{thmA}.

\begin{theorem}\label{cor:NF-amenable}
	Let $G$ be a countable group and $\cF$ be a family of subgroups of $G$. 
	If there exists an $N$-$\cF$-amenable action of $G$ on a compact metrizable space $X$ and $\asdim(F)\leq k$ for each $F\in \cF$, then $\asdim(G)\leq N+k$.\qed	
\end{theorem}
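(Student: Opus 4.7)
The plan is to observe that Theorem~\ref{cor:NF-amenable} follows by combining Proposition~\ref{prop:NF-amenable} with Theorem~\ref{thmA-asdim}, both of which have already been established. All the heavy lifting has been done; what remains is to check that the hypotheses of Theorem~\ref{thmA-asdim} are met.

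First, fix a proper left invariant metric on $G$ (such metrics exist on any countable group). By hypothesis, there is an $N$-$\cF$-amenable action of $G$ on a compact metrizable space $X$. Applying Proposition~\ref{prop:NF-amenable}, for every $\varepsilon > 0$ we obtain a uniform simplicial complex $E$ equipped with a simplicial $G$-action and a map $f \colon X \to E$ such that $\dim(E) \leq N$, $f$ is $G$-equivariant up to $\varepsilon$, and the stabilizer subgroup $G_v$ of each vertex $v \in E$ belongs to $\cF$.

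Next, I would verify condition (iii) of Theorem~\ref{thmA-asdim} with the parameter $k$ from the statement. Since each $G_v$ lies in $\cF$ and $\asdim(F) \leq k$ for every $F \in \cF$, it follows that $\asdim(G_v) \leq k$ for every vertex $v$ of $E$. (Here $G_v$ is viewed as a metric subspace of $G$ with its chosen proper left invariant metric, which is consistent with the convention of Definition~\ref{def:groups-in-C} since $\asdim$ is a coarse invariant.) The remaining hypotheses (i) and (ii) of Theorem~\ref{thmA-asdim} are supplied verbatim by Proposition~\ref{prop:NF-amenable}. Therefore Theorem~\ref{thmA-asdim} applies with the parameters $n = N$ and $k$, yielding $\asdim(G) \leq N + k$, as desired.

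There is no genuine obstacle here; the theorem is essentially a repackaging. The only thing to be mildly careful about is matching the $\varepsilon$-quantifiers: Theorem~\ref{thmA-asdim} requires that for \emph{every} $\varepsilon > 0$ one can produce the data $(X, E, f)$ satisfying (i)--(iii), and Proposition~\ref{prop:NF-amenable} provides exactly this (with the same fixed compact metrizable $X$ and the same family of stabilizers $\subseteq \cF$, only the complex $E$ and the map $f$ depending on $\varepsilon$). Once this alignment is noted, the conclusion is immediate.
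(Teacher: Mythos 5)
Your argument is correct and is exactly the paper's own proof: the theorem is stated as an immediate corollary of Proposition~\ref{prop:NF-amenable} combined with Theorem~\ref{thmA-asdim}, with the quantifier alignment and the check that each stabilizer $G_v\in\cF$ has $\asdim(G_v)\leq k$ being the only points to note, just as you did.
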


\begin{theorem}\label{cor:NF-amenable-general}
	Let $G$ be a countable group, $\cF$ be a family of subgroups of $G$, and $\fC$ be a collection of metric families satisfying Coarse Permanence~(\ref{def:closed-under-embeddings}), Finite Amalgamation Permanence~(\ref{def:amalgam}), and Finite Union Permanence~(\ref{def:finunion}). 
	
	 If there exists an $N$-$\cF$-amenable action of $G$ on a compact metrizable space $X$ and each $F\in \cF$ belongs to $\fC$, then $G$ is $N$-decomposable over $\fC$. If $\fC$ is also stable under weak decomposition, then $G$ is in $\fC$.\qed	
\end{theorem}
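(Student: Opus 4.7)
The plan is to combine Proposition~\ref{prop:NF-amenable} with Theorem~\ref{thmAn} in an essentially mechanical way; the real content has already been done in those two results.

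First, I would fix a proper left invariant metric on the countable group $G$ (such a metric exists since $G$ is countable). Given an arbitrary $\varepsilon > 0$, I would invoke Proposition~\ref{prop:NF-amenable} applied to the given $N$-$\cF$-amenable action of $G$ on the compact metrizable space $X$. This produces a uniform simplicial complex $E$ (depending on $\varepsilon$) equipped with a simplicial $G$-action, together with a map $f \colon X \to E$ satisfying: (i) $\dim(E) \leq N$; (ii) $f$ is $G$-equivariant up to $\varepsilon$ in the sense of Definition~\ref{def:equivariant_up_to}; (iii) the stabilizer $G_v$ of every vertex $v$ of $E$ lies in the family $\cF$.

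Next I would verify that the hypotheses of Theorem~\ref{thmAn} are met for $n=N$. Conditions (i) and (ii) are immediate from Proposition~\ref{prop:NF-amenable}. For condition (iii), I would use the assumption that each $F \in \cF$ belongs to $\fC$, together with condition (iii) from Proposition~\ref{prop:NF-amenable}, to conclude that for each vertex $v \in E$ the stabilizer $G_v$, viewed as a metric subspace of $G$, is in $\fC$. Since $\fC$ satisfies Coarse Permanence, Finite Amalgamation Permanence, and Finite Union Permanence by hypothesis, Theorem~\ref{thmAn} applies and yields that $G$ is $N$-decomposable over $\fC$.

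For the second part of the statement, if $\fC$ is additionally stable under weak decomposition, then since $G$ is $N$-decomposable over $\fC$ (hence weakly decomposable over $\fC$), stability under weak decomposition immediately yields $G \in \fC$. This step is the second conclusion in Theorem~\ref{thmAn} itself, so nothing new is required. The argument is therefore a transparent concatenation: Proposition~\ref{prop:NF-amenable} converts the dynamical hypothesis of finite $\cF$-amenability into the geometric hypothesis on equivariant maps to simplicial complexes needed by Theorem~\ref{thmAn}, and the permanence properties of $\fC$ do the rest. I do not anticipate any genuine obstacle; the only care required is ensuring that the proper left invariant metric used when invoking Definition~\ref{def:groups-in-C} is the same one used when applying Proposition~\ref{prop:NF-amenable}, which is automatic since Definition~\ref{def:groups-in-C} is independent of the choice of such metric by Coarse Permanence.
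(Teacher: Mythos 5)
Your proposal is correct and is exactly the paper's intended argument: the theorem is stated as a corollary obtained by feeding the output of Proposition~\ref{prop:NF-amenable} (for each $\varepsilon>0$) into Theorem~\ref{thmAn} with $n=N$, with the final conclusion under stability under weak decomposition coming from the ``in particular'' clause of Theorem~\ref{thmAn} (equivalently Theorem~\ref{thmA}). No differences worth noting.
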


Since the collections $\wD$, $\fE$ and $\fH$ all satisfy the assumptions of Theorem~\ref{cor:NF-amenable-general} (see Remark~\ref{rem:collections}), we have the following corollary.

\begin{corollary}\label{cor:deh}
Let $\fC$ be equal to $\wD$, $\fE$ or $\fH$. Let $G$ be a countable group and $\cF$ be a family of subgroups of $G$ such that each $F\in \cF$ belongs to $\fC$.
If there exists a finitely $\cF$-amenable action of $G$ on a compact metrizable space, then $G$ is in $\fC$.	\qed	
\end{corollary}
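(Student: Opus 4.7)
The plan is to invoke Theorem~\ref{cor:NF-amenable-general} directly, so the work reduces entirely to verifying its hypotheses for each of the three collections $\wD$, $\fE$, and $\fH$. Specifically, I need to check that each of these collections (i) satisfies Coarse Permanence, Finite Amalgamation Permanence, and Finite Union Permanence, and (ii) is stable under weak decomposition. Once these properties are in hand, the conclusion ``$G$ is in $\fC$'' is immediate from the second assertion of Theorem~\ref{cor:NF-amenable-general}, since a finitely $\cF$-amenable action is by definition $N$-$\cF$-amenable for some $N$.

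For the permanence properties, I would simply appeal to Remark~\ref{rem:collections}, which already records that all three collections $\wD$, $\fE$, and $\fH$ satisfy Coarse Permanence, Finite Amalgamation Permanence, and Finite Union Permanence (the references for these facts in the literature, notably \cite{Guentner_Tessera_Yu2} and \cite{Guentner}, are collected in that remark). For stability under weak decomposition: $\wD$ is stable under weak decomposition essentially by definition (it is defined as the smallest collection of metric families containing $\fB$ and closed under weak decomposition, see Definition~\ref{def:FDC}), while the analogous statements for $\fE$ and $\fH$ are again recorded in Remark~\ref{rem:collections}, citing \cite[Proof of Theorem 4.3]{Guentner_Tessera_Yu2} and \cite[Theorem 9.23]{Nicas-Rosenthal}.

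With those facts assembled, the proof is a one-line deduction: let $\fC$ be any of $\wD$, $\fE$, or $\fH$; by hypothesis each $F\in\cF$ belongs to $\fC$, and by hypothesis there is a finitely $\cF$-amenable (hence $N$-$\cF$-amenable for some $N$) action of $G$ on a compact metrizable space. The collection $\fC$ satisfies the three permanence properties by Remark~\ref{rem:collections} and is stable under weak decomposition as just noted, so Theorem~\ref{cor:NF-amenable-general} applies and yields that $G$ is in $\fC$, completing the proof.

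There is no genuine obstacle here; the entire corollary is a bookkeeping exercise that packages Theorem~\ref{cor:NF-amenable-general} together with the permanence properties of $\wD$, $\fE$, and $\fH$ recorded earlier in the paper. The only point at which one might be tempted to write something substantive is the stability under weak decomposition of $\fE$ and $\fH$, but since those results are already stated in the literature and referenced in Remark~\ref{rem:collections}, it suffices to cite them rather than reproduce the arguments.
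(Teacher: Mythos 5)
Your proposal is correct and is essentially the paper's own argument: the paper derives this corollary in one line from Theorem~\ref{cor:NF-amenable-general}, citing Remark~\ref{rem:collections} for the permanence properties and weak-decomposition stability of $\wD$, $\fE$, and $\fH$.
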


The main theorem of \cite{Bartels17} tells us that if a countable group $G$ is relatively hyperbolic with respect to peripheral subgroups $P_1,\dots, P_n$, then the action of $G$ on its boundary is finitely $\cP$-amenable, where $\cP$ is the family of subgroups of $G$ that are either virtually cyclic or subconjugated to one of the $P_i$'s. Combining this with Theorem~\ref{cor:NF-amenable-general} yields the following result.

\begin{theorem}\label{thm:rel_hyp}
Let $G$ be a countable group that is relatively hyperbolic with respect to peripheral subgroups $P_1,\dots,P_n$, and let $\fC$ be a collection of metric families satisfying Coarse Permanence, Finite Amalgamation Permanence, and Finite Union Permanence. If $\fC$ contains $P_1,\dots,P_n$ and the infinite cyclic group $\Z$, then $G$ is $N$-decomposable over $\fC$ for some~$N$. If $\fC$ is also stable under weak decomposition, then $G$ is in $\fC$.
\end{theorem}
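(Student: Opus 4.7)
The plan is to combine Bartels's theorem from \cite{Bartels17}---which asserts that the action of $G$ on its Bowditch boundary (a compact metrizable $G$-space) is finitely $\cP$-amenable---with Theorem~\ref{cor:NF-amenable-general}. Given this input, the entire statement will follow once I verify that every member of $\cP$, regarded as a metric subspace of $G$, lies in $\fC$, so the substantive content of the proof reduces to checking the family hypothesis of Theorem~\ref{cor:NF-amenable-general}.

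First I would dispose of the virtually cyclic members of $\cP$. An infinite virtually cyclic subgroup $V \leq G$ contains a copy of $\Z$ of finite index, so as a subspace of $G$ it is at finite Hausdorff distance from that copy of $\Z$ and hence is coarsely equivalent to $\Z$. Since $\Z \in \fC$ by hypothesis, Coarse Permanence yields $V \in \fC$. Finite subgroups have bounded diameter, and any constant map into $\Z$ serves as a coarse embedding (with non-decreasing $\delta,\varrho$ tending to infinity arranged trivially on the bounded scale), so they also lie in $\fC$ by Coarse Permanence.

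Next I would handle the case $P \subset g P_i g^{-1}$ for some $i$ and some $g \in G$. Conjugation by $g^{-1}$ carries $P$ onto $g^{-1} P g \subset P_i$, and a routine triangle-inequality computation using left invariance of the metric on $G$ gives $|d_G(g^{-1} p g, g^{-1} p' g) - d_G(p,p')| \leq 2\,\|g\|$ for $p,p' \in P$, showing that this conjugation is a coarse equivalence between $P$ and $g^{-1} P g$ viewed as subspaces of $G$. Meanwhile, the restriction of the metric of $G$ to $P_i$ is itself a proper left invariant metric on the group $P_i$, so by Definition~\ref{def:groups-in-C} the hypothesis $P_i \in \fC$ applies with respect to this metric. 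Hence $g^{-1} P g$ coarsely embeds into $P_i \in \fC$, so $g^{-1} P g \in \fC$ by Coarse Permanence, and a final application of Coarse Permanence gives $P \in \fC$.

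With $\cP \subset \fC$ verified, Theorem~\ref{cor:NF-amenable-general} immediately delivers $N$-decomposability of $G$ over $\fC$ for some $N$, and membership $G \in \fC$ whenever $\fC$ is additionally stable under weak decomposition. I do not foresee a deep obstacle in this plan; the only care required is ensuring compatibility between the two perspectives on subgroup geometry---as a subspace of $G$ and as an abstract countable group with its own proper left invariant metric---and this compatibility is precisely what the coarse invariance of proper left invariant metrics recorded in Definition~\ref{def:groups-in-C} guarantees.
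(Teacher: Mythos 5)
Your proposal is correct and follows essentially the same route as the paper: invoke Bartels's result that the boundary action is finitely $\cP$-amenable, verify that every member of $\cP$ lies in $\fC$ via Coarse Permanence (virtually cyclic groups being coarsely equivalent to $\Z$ or a bounded space, and conjugation being a coarse equivalence onto a subspace of some $P_i$), and then apply Theorem~\ref{cor:NF-amenable-general}. Your write-up simply makes explicit some details the paper leaves terse, such as the additive bound $2\|g\|$ for conjugation and the compatibility of the subspace metric on $P_i$ with Definition~\ref{def:groups-in-C}.
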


\begin{proof}
Let $\cP$ be the family of subgroups of $G$ whose members are either virtually cyclic or subconjugated to one of the $P_i$'s. Note that a virtually cyclic group is coarsely equivalent to either $\Z$ or the trivial group and that any two conjugate subgroups of $G$ are coarsely equivalent. Therefore, since $\fC$ satisfies Coarse Permanence, every element of $\cP$ is in $\fC$. The theorem now follows from Theorem~\ref{cor:NF-amenable-general} and  the above mentioned result of Bartels, \cite{Bartels17}, that the action of such a group on its boundary is finitely $\cP$-amenable.
\end{proof}

Theorem~\ref{thm:rel_hyp} is a generalization of Osin's well-known result that a finitely generated relatively hyperbolic group has finite asymptotic dimension if its peripheral subgroups do~\cite[Theorem 1.2]{Osin}. In particular, Theorem~\ref{thm:rel_hyp} applies to the collections $\wD$, $\fE$ and~$\fH$. Other authors have also found generalizations of Osin's result. It is interesting to compare Theorem~\ref{thm:rel_hyp} to Ramras-Ramsey's~\cite[Theorem 3.9]{ramras-ramsey}, where a different combination of permanence properties was employed to extend properties of metric spaces to relatively hyperbolic groups. 
In~\cite{Bartels17}, Bartels obtained the remarkable result that a countable relatively hyperbolic group satisfies the Farrell-Jones Conjecture if its peripheral subgroups~do.

%%%%%%%%%%%%%

\section{$\catz$ groups}\label{catz-groups}

Recall that a discrete group $G$ is said to act \emph{geometrically} on a metric space $Y$ if
it acts by isometries, the action is properly discontinuous and the quotient $Y/G$ is compact.
A \emph{$\catz$ group} is a countable discrete group that admits a geometric action on some 
finite dimensional $\catz$  space
(see \cite[II.1.1, page 158]{Bridson-Haefliger} for the definition of a $\catz$  space).

\begin{question}\label{catzero}
Is the asymptotic dimension of a $\catz$ group finite?
\end{question}

Nick Wright showed that the asymptotic dimension of a finite dimensional $\catz$ cube complex is bounded above by its geometric dimension,
\cite{Wright}.  Hence, any group that acts geometrically on such a complex has finite asymptotic dimension
thereby providing an abundance of examples of $\catz$ groups for which the answer to Question \ref{catzero} is  affirmative.

Let $G$ be a $\catz$ group.
Any amenable subgroup of $G$ is virtually abelian
\cite[Corollaries B and C]{Adams-Ballmann};
furthermore, there is an upper bound, $r$,  on the rank of all such subgroups,
\cite[Theorem C]{Caprace-Monod}.
Thus, if $\cF$ is the collection of amenable subgroups of $G$, then for all $H \in \cF$,  $\asdim H \leq r$.
One approach to answering Question \ref{catzero} affirmatively would be to attempt to apply our Theorem \ref{cor:NF-amenable}
by constructing a finitely $\cF$-amenable action of $G$ on a suitable compact metrizable space $X$.
Let $Y$ be a finite dimensional $\catz$ space on which $G$ acts geometrically
and let $\partial_\infty Y$ denote the \emph{visual boundary} of $Y$ (see  \cite[II.8]{Bridson-Haefliger}).
While $\partial_\infty Y$ would, at first glance,  appear to be a natural  candidate for the sought after compact space $X$,
it cannot fulfill this role as the  $G$-action on $\partial_\infty Y$ may have fixed points.
Nevertheless, Caprace constructed a set  $\partial_\infty^{\rm \, fine}\,Y$,  which he calls a  \emph{refinement} of $\partial_\infty Y$,
with the property that for each $x \in  \partial_\infty^{\rm fine}\,Y$ the stabilizer $G_x$ is amenable, that is, $G_x \in \cF$,
\cite{Caprace-2009}.

\begin{question}\label{refined-boundary}
Does Caprace's refined boundary, $\partial_\infty^{\rm \, fine}\,Y$, have a compact metrizable topology for which the $G$-action on it  is finitely $\cF$-amenable?
\end{question}

An application of Theorem \ref{cor:NF-amenable} yields the following. 
\begin{proposition}\label{prop:catzero-is-true}
An affirmative answer to Question \ref{refined-boundary} implies an affirmative answer to Question \ref{catzero}, that is, all $\catz$ groups have finite
asymptotic dimension. \qed
\end{proposition}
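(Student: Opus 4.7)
The plan is to apply Theorem~\ref{cor:NF-amenable} with $\cF$ taken to be the family of amenable subgroups of $G$ and $X = \partial_\infty^{\rm \, fine}\,Y$ equipped with the topology furnished by an affirmative answer to Question~\ref{refined-boundary}. That hypothesis directly supplies the existence of an $N$-$\cF$-amenable action of $G$ on the compact metrizable space $X$, so the only remaining task is to produce a uniform upper bound $k$ on $\asdim(F)$ as $F$ ranges over $\cF$.

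To produce such a $k$, I would simply assemble the two structural facts about amenable subgroups of $\catz$ groups already cited in the preamble to Question~\ref{refined-boundary}. By \cite[Corollaries B and C]{Adams-Ballmann}, every $F\in\cF$ is virtually abelian, and by \cite[Theorem C]{Caprace-Monod} there is an integer $r$, depending only on $G$ and $Y$, that bounds the rank of all such $F$. Next, I would use the standard coarse-geometric fact that a virtually abelian group of rank at most $r$ has asymptotic dimension at most $r$: any such group contains a finite-index copy of $\Z^{s}$ for some $s\le r$, finite-index subgroups are coarsely equivalent to the ambient group, and $\asdim(\Z^{s})=s$. Hence $\asdim(F)\le r$ for every $F\in\cF$.

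With the uniform bound $\asdim(F)\le r$ in hand, Theorem~\ref{cor:NF-amenable} now applies verbatim to the $N$-$\cF$-amenable action of $G$ on $X=\partial_\infty^{\rm \, fine}\,Y$, yielding $\asdim(G)\le N+r<\infty$ and completing the proof. There is essentially no technical obstacle here: the substantive content has been outsourced to Theorem~\ref{cor:NF-amenable} and to the Adams-Ballmann and Caprace-Monod theorems, so the step requiring genuine care is only the clean verification that ``amenable'' in the sense of Caprace's refined boundary matches ``amenable subgroup of $G$'' in the sense required for $\cF$, which is immediate from the cited definitions.
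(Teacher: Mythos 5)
Your proposal is correct and is exactly the argument the paper intends: the paper states the proposition with no written proof because it follows immediately from the preceding discussion (Adams--Ballmann and Caprace--Monod give $\asdim(F)\le r$ uniformly for all amenable subgroups $F\in\cF$) together with Theorem~\ref{cor:NF-amenable} applied to the hypothesized finitely $\cF$-amenable action on $\partial_\infty^{\rm \, fine}\,Y$. You even fill in slightly more detail than the paper does on why rank at most $r$ bounds the asymptotic dimension, so there is nothing to add.
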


A theme of  \cite{Bartels-Luck-a, BLR08}, which was further emphasized by Bartels in  \cite{Bartels17} (see his \cite[Theorem 4.3]{Bartels17}),
is  that the Farrell-Jones Conjecture for a group $G$ holds relative to a family $\cF$ if there exists an action of $G$ on a compact,
finite dimensional, contractible ANR (absolute neighborhood retract) satisfying a suitable regularity condition relative to~$\cF$.
In particular, the  Farrell-Jones Conjecture for $\catz$ groups could be approached by 
an affirmative answer to a stronger version of Question \ref{refined-boundary}
where the condition that $Y \cup \partial_\infty^{\rm \, fine}\,Y$ is a  compact,
finite dimensional, contractible ANR is also required.
However,  the issue of finding a suitable boundary for $Y$ was cleverly bypassed by Bartels and L\"uck in  their proof of the Farrell-Jones Conjecture for $\catz$ groups, \cite{Bartels-Luck-a, Bartels-Luck-b},
by making use of homotopy actions and large balls in $Y$.

%%%%%%%%%%%%%%%%%%%%%%%%%%%%%%%%%%%%%%%%%%%%
%%
%%  References 
%%
%%%%%%%%%%%%%%%%%%%%%%%%%%%%%%%%%%%%%%%%%%%%

\bibliographystyle{amsalpha}
\bibliography{NR-F-amenable}

\end{document}